\numberwithin{equation}{section}
\numberwithin{figure}{section}
\theoremstyle{plain}
\newtheorem{thm}{\protect\theoremname}
\theoremstyle{plain}
\newtheorem{defn}[thm]{\protect\definitionname}
\theoremstyle{plain}  
\newtheorem{prop}[thm]{\protect\propositionname}
\theoremstyle{plain}
\newtheorem{lem}[thm]{\protect\lemmaname}
\providecommand{\propositionname}{Proposition}
\providecommand{\lemmaname}{Lemma}
\providecommand{\theoremname}{Theorem}
\providecommand{\definitionname}{Definition}
\begin{document}

\begin{frontmatter}[classification=text]


\author[tomg]{Tom Gilat}

\begin{abstract}
The main result of this paper is a decomposition theorem for a measure on the one-dimensional torus. Given a "sufficiently large" subset $S$ of the positive integers, an arbitrary measure on the torus is decomposed as the sum of two measures. The first one $\mu_1$ has the property that the random walk with initial distribution $\mu_1$ evolved by the action of $S$ equidistributes very fast. The second measure $\mu_2$ in the decomposition is concentrated on very small neighborhoods of a small number of points. 
\end{abstract}
\end{frontmatter}




\begin{section}{Introduction}
This paper is concerned with the dynamics of subsemigroups of the positive integers acting on the one-dimensional torus $\mathbb{R}/\mathbb{Z}$. This extensive line of research goes back to Furstenberg, who described the minimal sets of the action of the semigroup generated by two multiplicatively independent integers. They are finite periodic orbits and the whole torus.

Furstenberg also made several conjectures about such actions, which had an enormous impact on the field. Perhaps the most prominent of these asks for a classification of invariant measures on the torus under the action of the semigroup generated by 2 and 3 (or any other pair of multiplicatively independent integers). There has been some remarkable progress on this problem, but the conjecture is still wide open.

These problems become more manageable if one considers the action of "larger" semigroups. For example, Einsiedler and Fish gave a classification of invariant measures under the action of a semigroup with positive logarithmic density.

The main result of this paper is a decomposition theorem for a measure on the torus. Given a "sufficiently large" subset $S$ of the positive integers, an arbitrary measure on the torus is decomposed as the sum of two measures. The first one $\mu_1$ has the property that the random walk with initial distribution $\mu_1$ evolved by the action of $S$ equidistributes very fast. The second measure $\mu_2$ in the decomposition is concentrated on very small neighborhoods of a small number of points. 

The proof of the main result uses tools from additive combinatorics and builds on the work of Bourgain, Furman, Lindenstrauss and Mozes on the classification of stationary measures under the action of non-commuting toral automorphisms.
 
We define the general setting as follows: for $L>0$, let $S\subset [L,2L]$ be a set of natural numbers, $|S|>L^{\beta}$ for $0<\beta<1$, with $S$ being $(\widetilde{C},\lambda)$-regular (definition follows). The variables $L,\beta$ and $\widetilde{C},\lambda>0$ should be considered as global parameters and are referred to in the different theorems, propositions and lemmas, typically by giving thresholds on their values in the conditions of the statements. All the measures in this paper are Borel measures, with the topology of the measure space clear from the context. For countable spaces such as $\mathbb{N}$ the topology is the discrete one. In a minor abuse of terminology we refer to the Haar measure on $\mathbb{T}$ as the Lebesgue measure on $\mathbb{T}$.

For a non-empty set $S\subset \mathbb{N}$ we let
\begin{equation}
\nu_{S}=\frac{1}{|S|}\sum_{s\in S}\delta_{s}
\end{equation}
be the measure that averages over $S$. The set $S$ acts on the torus in the following standard way: $s.x=sx\ (\mathrm{mod}\ 1)$ for $s\in S$. For $s\in S$, let $T_s:\mathbb{T}\rightarrow\mathbb{T}$ be the mapping:
$T_s (x): x \mapsto s.x$ .

We denote by $\mathcal{P}(\cdot)$ the space of Borel probability measures on a topological space. For $\mu\in\mathcal{P}(\mathbb{T}), \nu\in\mathcal{P}(S)$, define the measure $\nu*\mu\in\mathcal{P}(\mathbb{T})$ as follows:
\begin{equation}
\nu*\mu=\sum_{s\in S}\nu(s)T_{s*}\mu,
\end{equation}
where $T_{s*}\mu(E)=\mu(T_s^{-1}(E))$ for a Borel set $E\subset\mathbb{T}$.

The following definition says what it means for a set $S$ to be $(\widetilde{C},\lambda$)-regular.
\begin{defn}
	Let $\widetilde{C}, \lambda>0$. We say that a set $S\subset[L,2L]\subset\mathbb{N}$ is $(\widetilde{C},\lambda)$-\emph{regular at scale} $r$, where $r$ is a positive real number, if  
	\begin{equation}
	|I\cap S|\leq \widetilde{C}\cdot\left(\frac{|I|}{L}\right)^{\lambda}|S|
	\end{equation}
	for any interval $I\subset[L,2L]\subset\mathbb{R}$ with $|I|\geq r$. By $|\cdot|$, we denote cardinality or the Lebesgue measure according to context.
\end{defn}	
\noindent If we say that a set is $(\widetilde{C},\lambda)$-regular, we mean that it is $(\widetilde{C},\lambda)$-regular at scale $1$.

We are ready to state the main theorem of this work, which is a decomposition theorem for a measure on the torus. In the statement, $\mu_1,\mu_2$ are non-negative Borel measures on $\mathbb{T}$, and $\mathsf{B}_{x,r}$ denotes an open ball in $\mathbb{T}$ with center at $x$ and radius $r$. A finite set $X\subset \mathbb{T}$ is $\delta$-\emph{separated} if for any $x,y\in X, d(x,y)>\delta$, where $d(\cdot,\cdot)$ is the usual metric on $\mathbb{T}$.
\begin{thm}\label{MainThm1}
	Let $\mu$ be a probability measure on $\mathbb{T}$. For every $\lambda,\beta>0$ there exist $L_1,\kappa,C,U,\tau_0>0,k\in\mathbb{N}$, such that if $L>L_1$, $S \subset [L,2L] $ is a $(\widetilde{C},\lambda)$-regular set for some $\widetilde C <L^{\tau_0}$, and $0<\tau<\tau_0$ with $|S|>L^\beta$, then there is a decomposition $\mu=\mu_{1}+\mu_{2}$ such that
	\begin{equation}\label{mu1conclusion}
	\left| \widehat{\nu_{S}^{*k}*\mu_{1}}(n)\right|\leq L^{-\tau} \qquad \forall 0\neq |n|<L^\tau,
	\end{equation}
	and there are finite subsets $X_{1},X_{2},...,X_{l}$ of $\mathbb{T}$, where $l<L^{C\tau}$, such that each $X_i$ is $\frac{1}{M}$-separated, $\mu_{2}$ is supported on $\bigcup_{i=1}^{l}\bigcup_{x\in X_{i}}\mathsf{B}_{x,\frac{1}{N}}$,
	where $N=L^{U}$ and $M=N^{1-\kappa}$, and $\mu_1$ is supported on the complement of the support of $\mu_2$.
\end{thm}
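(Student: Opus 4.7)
The plan is to follow the strategy of Bourgain--Furman--Lindenstrauss--Mozes: isolate the frequencies at which $\nu_{S}^{*k}*\mu$ has large Fourier coefficient, show by additive combinatorics that these bad frequencies come from a small, structured set of integers, and then strip off from $\mu$ the mass that is physically responsible for them.

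First I would translate the target inequality into Fourier language. Since $\widehat{T_{s*}\mu}(n)=\widehat{\mu}(sn)$, one has
\begin{equation*}
\widehat{\nu_{S}^{*k}*\mu}(n)=\frac{1}{|S|^{k}}\sum_{s_{1},\ldots,s_{k}\in S}\widehat{\mu}(s_{1}\cdots s_{k}\,n).
\end{equation*}
If the left-hand side exceeds $L^{-\tau}$ for some nonzero $|n|<L^{\tau}$, then, by pigeonholing over the $k$-fold product set, a positive proportion of the dilates $s_{1}\cdots s_{k}\,n$ must lie in the "bad set" $B_{\eta}=\{m\in\mathbb{Z}:|\widehat{\mu}(m)|>\eta\}$, for some threshold $\eta\asymp L^{-C'\tau}$ to be chosen. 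This reduces everything to a combinatorial question about how richly $B_{\eta}$ can be hit by multiplicative dilates of a single integer $n$ by elements of $S^{(k)}=S\cdots S$.

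The heart of the argument is the next step: using the $(\widetilde{C},\lambda)$-regularity of $S$ to forbid this concentration unless $\mu$ is structurally degenerate. The regularity hypothesis controls the additive/multiplicative energy of $S$ at every scale $\ge 1$, so one can invoke sum-product and Pl\"unnecke--Ruzsa type inequalities to argue that $S^{(k)}$ is very spread out in $[L^{k},(2L)^{k}]$. If many of its dilates of $n$ nevertheless fall into $B_{\eta}$, then $B_{\eta}$ itself must be essentially contained in the union of $\ell<L^{C\tau}$ generalized arithmetic progressions of a controlled aspect. Translating this via the elementary dictionary between large Fourier coefficients of $\widehat{\mu}$ at $m$ and concentration of $\mu$ near rationals with denominator comparable to $m$, one obtains finite sets $X_{1},\ldots,X_{\ell}$ in $\mathbb{T}$, which (after discarding points at intermediate scales between $1/M$ and $1/N$, with $N=L^{U}$ and $M=N^{1-\kappa}$) are $1/M$-separated. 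Any contribution of $\mu$ to the bad Fourier coefficients then lies in $E:=\bigcup_{i}\bigcup_{x\in X_{i}}\mathsf{B}_{x,1/N}$.

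To finish, I would set $\mu_{2}=\mu|_{E}$ and $\mu_{1}=\mu-\mu_{2}$, and verify directly from the Fourier identity above that the removal of $\mu_{2}$ kills the contributions to every nonzero $|n|<L^{\tau}$. If a single pass does not suffice (because removing $\mu_{2}$ could in principle generate new large Fourier coefficients at lower scales), one iterates finitely many times, and the number of iterations, as well as the powers $U$, $C$, $\kappa$ and $k$, are absorbed into the global constants. The hardest step is clearly the middle one: producing quantitative multiplicative expansion of $S^{(k)}\cdot n$ inside $\mathbb{Z}$ from the mere regularity hypothesis on $S$, with constants that depend only on $\lambda,\beta$ and are uniform as $\widetilde{C}$ ranges up to $L^{\tau_{0}}$. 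This is precisely where the sum-product machinery for the integers, combined with the BFLM-style "detection" lemma for large Fourier coefficients, must be applied with care, and where the free parameters of the theorem are ultimately determined.
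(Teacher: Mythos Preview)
Your iterative outer loop is correct and matches the paper: one repeatedly applies a ``large coefficient $\Rightarrow$ granulated mass'' proposition, strips that mass off, and bounds the number of rounds by the exponential decay of the remaining total mass. The gap is in the middle step, where your mechanism is essentially inverted relative to what actually works.

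You argue that if many dilates $s_1\cdots s_k n$ land in $B_\eta$, then $B_\eta$ must be \emph{small and structured} (contained in $\ell<L^{C\tau}$ GAPs), and then invoke an ``elementary dictionary'' taking a large coefficient $\widehat\mu(m)$ to concentration near rationals with denominator $\sim m$. Neither of these holds. A single moderately large Fourier coefficient gives no useful physical concentration: the passage from spectral data to granulation (Proposition~\ref{GranulationProp}) requires that the set of large coefficients be \emph{dense} at the relevant scale, i.e.\ that $\mathcal{N}\bigl(\mathcal{F}(\mu,t)\cap[-N,N];M\bigr)\ge s\cdot N/M$ for a not-too-small $s$. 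The argument therefore runs in the opposite direction from yours: starting from one large coefficient of $\mu_n$, one must \emph{manufacture} many large coefficients of $\mu_{n-k}$, not confine them.

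Concretely, the paper first gets an initial $(N/M)^\beta$-many large coefficients of $\mu_{n-1}$ from the single large coefficient of $\mu_n$ via the relation $\widehat{\mu_n}(a)=|S|^{-1}\sum_s\widehat{\mu_{n-1}}(sa)$. It then iterates a bootstrap lemma: assuming $\mathcal{N}(\mathcal{F}(\mu_n,\delta)\cap[-N,N];M)\ge(N/M)^\alpha$, one either already has $(N/M)^{\alpha+\alpha_{inc}}$ large coefficients for $\mu_{n-1}$, or one uses a Balog--Szemer\'edi--Gowers step (Lemma~\ref{FourierBSG}) together with Bourgain's discretized \emph{projection} theorem (Theorem~\ref{ProjThm}) applied to directions $\overline{(-s_1,s_2)}$, $s_1,s_2\in S$, to derive a contradiction. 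The $(\widetilde C,\lambda)$-regularity enters precisely here, to guarantee the non-concentration of the direction measure required by the projection theorem; Pl\"unnecke--Ruzsa alone does not produce the needed dimension increment. After finitely many bootstrap steps the exponent exceeds $1-\epsilon_0$, an energy/projection argument (Lemma~\ref{FinalBootstrap}) upgrades this to genuine positive density, and only then does the granulation proposition yield the $\tfrac1M$-separated set $X$. Your sketch is missing this density-increment engine and replaces it with a structural conclusion about $B_\eta$ that is not available.
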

 
The usefulness of this theorem is due to the fact that $M<N$ in a well controlled manner. This implies a granulation phenomenon on the support of $\mu_2$, meaning that  $\mu_2(\mathbb{T})$ is larger, in a controlled manner, than the Lebesgue measure of the support of $\mu_2$. In addition, \ref{mu1conclusion} effectively describes $\mu_1$ as being "close to uniform" over its support.

In a follow-up paper we intend to show how Theorem \ref{MainThm1} can be used to prove effective equidistribution results in this context.

\end{section}
	
\begin{section}{Preliminaries from additive combinatorics}
The following inequality is due to Ruzsa. It is surprisingly useful, given how simple it is to prove.
\begin{lem}[{{\cite[Lemma 2.6]{tao_vu_2006} -- Ruzsa triangle inequality}}]\label{RuszaTriangleIneq}
	Let $A,B,C\subset G$ with $G$ any additive group and $C\neq\emptyset$. Then
	\[
	|A-B|\leq\frac{|A-C||B-C|}{|C|}
	\]
\end{lem}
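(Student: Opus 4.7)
The plan is to prove the stronger multiplicative form $|C|\cdot|A-B|\leq|A-C|\cdot|B-C|$, from which the stated inequality follows by dividing through by $|C|$ (which is nonzero since $C\neq\emptyset$). The natural strategy, following Ruzsa, is to exhibit an injection from $C\times(A-B)$ into $(A-C)\times(B-C)$ and then compare cardinalities.

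To construct the injection, first fix, for every element $d\in A-B$, a single representation $d=a_{d}-b_{d}$ with $a_{d}\in A$ and $b_{d}\in B$; such a choice exists by definition of the difference set. Then define
\[
\phi\colon C\times(A-B)\longrightarrow (A-C)\times(B-C),\qquad \phi(c,d)=(a_{d}-c,\,b_{d}-c).
\]
The target membership is immediate: $a_{d}-c\in A-C$ and $b_{d}-c\in B-C$.

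The key verification is injectivity. Suppose $\phi(c,d)=\phi(c',d')$. Then $a_{d}-c=a_{d'}-c'$ and $b_{d}-c=b_{d'}-c'$, so subtracting gives $a_{d}-b_{d}=a_{d'}-b_{d'}$, i.e.\ $d=d'$. Once the second coordinates agree the pre-chosen representatives satisfy $a_{d}=a_{d'}$ and $b_{d}=b_{d'}$, and either of the original equations then forces $c=c'$. Hence $\phi$ is injective, yielding $|C|\cdot|A-B|\leq|A-C|\cdot|B-C|$.

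I do not anticipate a genuine obstacle here; the only subtlety is the initial choice of representatives $a_{d},b_{d}$, which is what makes the reverse direction of $d=a_{d}-b_{d}$ well-defined and lets the injectivity argument recover $(c,d)$ from $\phi(c,d)$. Everything else is bookkeeping in the ambient abelian group $G$, with no use of inverses beyond subtraction, so the argument applies to any additive group as claimed.
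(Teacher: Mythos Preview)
Your proof is correct and is precisely the standard Ruzsa injection argument. The paper does not give its own proof of this lemma; it simply cites \cite[Lemma~2.6]{tao_vu_2006}, where the argument you have written is the one that appears.
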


We will need the following graph-theoretic result, closely connected to the Balog-Szemer\'edi-Gowers Theorem, due to Sudakov B., Szemer\'edi E. and Vu  V. H. .
\begin{thm}[\cite{MR2155059}, Lemma 4.2]\label{BSG_Thm}
	Let n and K be positive numbers, and let $\mathcal{G}=\mathcal{G}(A,B,E)$ be a (finite) bipartite graph, where $|B|\leq |A|=n$ and $|E|=n^{2}/K$. Then one can find $A'\subset A$ and $B'\subset B$ such that
	\begin{enumerate}
		\item $|A'|\geq n/(16K^{2})$ and $|B'|\geq n/(4K)$,
		\item $|(A'\times B')\cap E|\geq |A'||B'|/(4K)$
		\item for each $a\in A'$ and $b\in B'$, there are $n^{2}/(2^{12}K^{5})$ paths of length 3 whose two endpoints are a and b.
	\end{enumerate} 
\end{thm}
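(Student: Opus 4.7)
The plan is to follow the standard Balog--Szemer\'edi--Gowers route: a Cauchy--Schwarz count of paths of length $2$, then a dependent-random-choice step that produces a large subset $A'$ on which every pair shares many common neighbours, and finally a Markov argument to extract $B'$. As a preliminary cleaning, discard every $a\in A$ with $\deg(a)<n/(2K)$; the number of edges lost is at most $n\cdot n/(2K)=|E|/2$, so the induced subgraph on $A_{1}\cup B$ retains at least $n^{2}/(2K)$ edges and every $a\in A_{1}$ has degree $\geq n/(2K)$.

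Next, count cherries (paths of length $2$) with endpoints in $A_{1}$. Writing $d_{b}$ for the $A_{1}$-degree of $b\in B$, Cauchy--Schwarz gives
\[
\sum_{b\in B}d_{b}^{2}\;\ge\;\frac{(\sum_{b}d_{b})^{2}}{|B|}\;\ge\;\frac{(n^{2}/(2K))^{2}}{n}\;=\;\frac{n^{3}}{4K^{2}},
\]
so the average of $|N(a)\cap N(a')|$ over pairs $(a,a')\in A_{1}\times A_{1}$ is $\gtrsim n/K^{2}$. Call $(a,a')$ \emph{good} if $|N(a)\cap N(a')|\geq cn/K^{2}$ for a small absolute constant $c$; by truncation, bad pairs contribute at most half of the cherry sum, so good pairs still contribute $\gtrsim n^{3}/K^{2}$ to it.

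Now apply dependent random choice. Pick $b^{*}\in B$ uniformly and set $A'=N(b^{*})\cap A_{1}$. Since $\mathbb{P}[a,a'\in A']=|N(a)\cap N(a')|/|B|$, a direct computation yields
\[
\mathbb{E}|A'|^{2}\;=\;\frac{1}{|B|}\sum_{b}d_{b}^{2}\;\ge\;\frac{n^{2}}{4K^{2}},
\]
while the same formula, restricted to bad pairs, bounds $\mathbb{E}[\#\text{bad pairs in }A']$ by a smaller quantity using the definition of ``bad''. With $c$ small enough, some realisation of $b^{*}$ yields $|A'|\geq n/(16K^{2})$ together with a bad-pair count $\leq|A'|^{2}/4$. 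Deleting one endpoint of each bad pair produces a refined set -- still denoted $A'$ -- of size $\geq n/(16K^{2})$ in which \emph{every} pair shares at least $cn/K^{2}$ common neighbours.

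Finally, set $B'=\{b\in B:\deg_{A'}(b)\geq|A'|/(4K)\}$. Every $a\in A'$ has $\geq n/(2K)$ neighbours in $B$, hence $|E(A',B)|\geq|A'|\cdot n/(2K)$; a Markov-type argument then gives both $|B'|\geq n/(4K)$ and $|E(A',B')|\geq|A'||B'|/(4K)$, which are items~(1) and~(2). For any $a\in A'$ and $b\in B'$,
\[
\#\{\text{length-}3\text{ paths }a\to b\}=\sum_{a_{2}\in N(b)\cap A'}|N(a)\cap N(a_{2})|\;\geq\;\frac{|A'|}{4K}\cdot\frac{cn}{K^{2}}\;\gtrsim\;\frac{n^{2}}{K^{5}},
\]
since every pair in $A'$ is good and $|A'|\geq n/(16K^{2})$. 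The main obstacle is the quantitative bookkeeping in the dependent-random-choice/deletion step: one must simultaneously secure the three lower bounds $n/(16K^{2})$, $n/(4K)$, and $n^{2}/(2^{12}K^{5})$, which requires a careful optimisation of the cleaning threshold, the cutoff $c$, and the high-degree threshold defining $B'$, rather than settling for mere $O(1)$ factors.
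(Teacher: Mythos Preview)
The paper does not prove this statement; it is quoted from \cite{MR2155059} and used as a black box in the proof of Lemma~\ref{FourierBSG}, so there is no in-paper argument to compare yours against.

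Your outline follows the standard Balog--Szemer\'edi--Gowers route and most of it is sound, but the deletion step is a genuine gap, not mere bookkeeping. If the bad-pair count in $A'$ is only bounded by $|A'|^{2}/4$, then ``deleting one endpoint of each bad pair'' can remove up to $|A'|^{2}/4$ vertices, which already exceeds $|A'|$ once $|A'|>4$; no choice of the cutoff $c$ rescues this. The second-moment quantity $\mathbb{E}|A'|^{2}\ge n^{2}/(4K^{2})$ is simply the wrong thing to compare with $\mathbb{E}[\#\text{bad pairs in }A']$ if one hopes to delete one vertex per bad pair and keep a positive fraction of $A'$.

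The remedy (and essentially the argument in \cite{MR2155059}) is to abandon the goal of making \emph{every} pair in $A'$ good. From the cherry count one gets directly that $\sum_{a\in A_{1}}\bigl|\{a'\in A_{1}:(a,a')\text{ bad}\}\bigr|$ is small, and one takes $A'$ to be those $a\in A_{1}$ with at most (say) $n/(32K^{2})$ bad partners; a Markov step gives $|A'|\ge n/(16K^{2})$, with no dependent random choice needed. Your construction of $B'$ then works verbatim. For item~(3), given $a\in A'$ and $b\in B'$, of the $\ge |A'|/(4K)$ vertices $a_{2}\in N(b)\cap A'$ at most $n/(32K^{2})\le |A'|/2$ are bad partners of $a$, so at least $|A'|/(8K)$ remain good, and each contributes $\ge cn/K^{2}$ paths, giving the required $n^{2}/(2^{12}K^{5})$. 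With this correction the rest of your argument goes through.
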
	

We need two pieces of notation for the statement of the next lemma. They will be used throughout the paper. 
\begin{defn}
	Given a set $A\subset\mathbb{R}$ and $M\in \mathbb{R}$, let $\mathcal{N}(A;M)$ be the minimal number of open balls with radius $M>0$ (the center of a ball is any point in $\mathbb{R}$) that cover the set $A$.
\end{defn}
\begin{defn}
	Given $\mu\in\mathcal{P}(\mathbb{T})$ and a real number $\delta\geq 0$, let 
	\begin{equation}
	\mathcal{F}(\mu,\delta)=\left\{a\in\mathbb{Z}\backslash\{0\}:|\widehat{\mu}(a)|>\delta\right\}
	\end{equation}
\end{defn}

The following lemma allows us to extract from an initial set of high Fourier coefficients a set of relatively large Fourier coefficients that is stable with respect to subtraction. The sets are in a window around 0 and are viewed at some resolution $M$. The proof uses a counting argument which involves the graph-theoretic Lemma~\ref{BSG_Thm} as well as the algebraic nature of the ring of integers.
\begin{lem}\label{FourierBSG}
	For positive integer numbers $N,M$ and a probability measure $\mu$ on $\mathbb{T}$; let $A_{0}$ be an $M$-separated subset of $\mathcal{F}\left(\mu,\delta\right)\cap [-N,N]$.
	Let $R$ be a positive real number, and assume that 
	\[
	\mathcal{N}(\mathcal{F}\left(\mu,\delta^{2}/8\right)\cap [-2N,2N];M)\leq R|A_{0}|  .
	\]
	Then there exists a subset $A_{1}\subset A_{0}$ such that
	\begin{enumerate}
		\item $\left|\frac{1}{|A_{1}|}\sum_{a\in A_{1}}\widehat{\mu}(a)\right|\geq \frac{\delta}{2}$
		\item $|A_{1}|\gg |A_{0}|\delta^{2}$
		\item $\mathcal{N}(A_{1}-A_{1};M)\ll |A_{0}|R^{6}\delta^{-8}$ 
	\end{enumerate}	
\end{lem}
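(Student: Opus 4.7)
The proof proceeds in three stages: (i) a phase-alignment step that guarantees condition~(1); (ii) a Cauchy--Schwarz / energy argument showing that a $\delta^2$-fraction of pairs $(a,b)\in A_0\times A_0$ have $a-b$ in $\mathcal{F}(\mu,\delta^2/8)$; (iii) an application of Theorem~\ref{BSG_Thm} combined with covering-number estimates (powered by Lemma~\ref{RuszaTriangleIneq}) to produce $A_1$ satisfying (2) and (3).

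\textbf{Phase alignment and energy.} Write $\widehat{\mu}(a)=|\widehat{\mu}(a)|e^{i\theta_a}$ and average over $\theta\in[0,2\pi)$ the indicator of $\{a\in A_0:\operatorname{Re}(e^{-i\theta}\widehat{\mu}(a))\geq \delta/2\}$. For each fixed $a\in A_0$ the $\theta$-measure of this set is $\gg 1$ (since $e^{-i\theta}\widehat{\mu}(a)$ sweeps a circle of radius $>\delta$), so pigeonhole produces a direction $\theta_0$ for which $A_0^{\star}:=\{a\in A_0:\operatorname{Re}(e^{-i\theta_0}\widehat{\mu}(a))\geq \delta/2\}$ has $|A_0^{\star}|\gg|A_0|$; condition~(1) then holds automatically on every subset $A_1\subset A_0^{\star}$. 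Next, let $D(x):=\sum_{a\in A_0^{\star}}e^{-2\pi iax}$. Phase alignment gives $\bigl|\int D\,d\mu\bigr|=\bigl|\sum_{a\in A_0^{\star}}\widehat{\mu}(a)\bigr|\geq(\delta/2)|A_0^{\star}|$, and Cauchy--Schwarz against the probability measure $\mu$ yields
\[
\sum_{a,b\in A_0^{\star}}\widehat{\mu}(a-b)\;=\;\int |D|^2\,d\mu\;\geq\;\Bigl|\int D\,d\mu\Bigr|^2\;\gg\;\delta^2|A_0^{\star}|^2 .
\]
The left-hand sum is real by the symmetry $\widehat{\mu}(-n)=\overline{\widehat{\mu}(n)}$, so discarding pairs with $|\widehat{\mu}(a-b)|\leq\delta^2/8$ (whose total contribution is at most $(\delta^2/8)|A_0^{\star}|^2$ in absolute value) shows that $\gg \delta^2|A_0^{\star}|^2$ pairs $(a,b)$ satisfy $a-b\in\mathcal{F}':=\mathcal{F}(\mu,\delta^2/8)\cap[-2N,2N]$.

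\textbf{BSG and covering.} Form the bipartite graph $\mathcal{G}(A_0^{\star},A_0^{\star},E)$ with $(a,b)\in E\iff a-b\in\mathcal{F}'$, so $|E|=n^2/K$ with $K=O(\delta^{-2})$ and $n=|A_0^{\star}|$. Theorem~\ref{BSG_Thm} produces subsets $A'',B''\subset A_0^{\star}$ with $|B''|\gg n/K\asymp|A_0|\delta^2$---the larger side, which we take as $A_1$, immediately giving condition~(2)---and with $\gg n^2/K^5$ paths of length~3 between any $a\in A''$ and $b\in B''$. Each such path $a-c_1-c_2-b$ writes $a-b=(a-c_1)+(c_1-c_2)+(c_2-b)$ as a sum of three elements of $\mathcal{F}'$. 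The covering hypothesis places $\mathcal{F}'$ inside the $M$-neighbourhood of a set $C$ with $|C|\leq R|A_0|$; combining the path multiplicity with this cover and with the fact that $\mathcal{F}'\subset\mathbb{Z}$ (so each $M$-ball contains only $O(M)$ integers) yields a bound linear in $|A_0|$ on $\mathcal{N}(A''-B'';O(M))$. A final application of Lemma~\ref{RuszaTriangleIneq} in its covering-number form, using $A''$ as the pivot set (whose size is $\gg|A_0|\delta^4$), passes this to $\mathcal{N}(A_1-A_1;O(M))=\mathcal{N}(B''-B'';O(M))\ll R^6\delta^{-8}|A_0|$, which is condition~(3).

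\textbf{Main obstacle.} The chief difficulty is the covering estimate for $A''-B''$. The naive bound $\mathcal{N}(A''-B'';3M)\leq|C+C-C|\leq K_0^3=R^3|A_0|^3$ is \emph{cubic} in $|A_0|$, whereas the conclusion demands \emph{linearity}. Overcoming this is precisely where both the many-paths assertion of Theorem~\ref{BSG_Thm} and the integer (``algebraic'') structure of $\mathcal{F}'$ enter the argument: the path multiplicity forces each difference $a-b$ to admit many representations in $\mathcal{F}'+\mathcal{F}'+\mathcal{F}'$, and comparing the total path count with the crude upper bound on the number of such triples collapses the cube to a linear dependence. The exponents $R^6$ and $\delta^{-8}$ in the final bound arise from the ensuing Ruzsa squaring combined with the $K^{O(1)}\sim\delta^{-O(1)}$ losses inherent in Theorem~\ref{BSG_Thm}.
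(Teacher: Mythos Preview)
Your outline follows the paper's proof almost exactly: phase alignment to a quarter-subset, Cauchy--Schwarz on $\int|D|^2\,d\mu$, the edge set $E=\{(a,b):a-b\in\mathcal F'\}$, Theorem~\ref{BSG_Thm}, the path-counting bound on $A''-B''$, and finally Lemma~\ref{RuszaTriangleIneq}. The only structural difference is the order of operations: the paper discretises \emph{before} invoking Theorem~\ref{BSG_Thm}, replacing $A$ by $\bar A=\{\lfloor a/M\rfloor M:a\in A\}+\{0,M\}$ and $\mathcal F'$ by the analogous set $H$, so that the graph lives in $M\mathbb Z$ and the hypothesis reads simply $|H|\le 16R|\bar A|$; the path-counting then gives a genuine cardinality bound $|A'-B'|\ll R^3\delta^{-O(1)}|\bar A|$ and Ruzsa applies verbatim. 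Your route applies BSG to $A_0^\star$ itself and postpones the scale-$M$ bookkeeping to the counting step; this works too, but the justification you give for the linear bound is not the right one.

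Specifically, the remark that ``$\mathcal F'\subset\mathbb Z$, so each $M$-ball contains only $O(M)$ integers'' would, if used directly, only give $|\mathcal F'|\le O(RM|A_0|)$ and hence a triple count of order $(RM|A_0|)^3$, producing an unwanted $M^3$ factor. The actual reason the counting collapses to a linear bound at scale $M$ is that $A_0^\star$ is $M$-\emph{separated}: for fixed $(a,b)$, distinct intermediate vertices $(b_1,a_1)$ give differences $a-b_1$, $a_1-b$ that are $M$-apart, so after rounding to $M\mathbb Z$ the $\gtrsim n^2/K^5$ path-triples remain distinct and lie in a set of size $\le(O(R|A_0|))^3$. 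With that fix your argument is complete; it is exactly the computation the paper does, just with the discretisation performed after BSG rather than before. (Doing it first, as the paper does, also lets you apply Lemma~\ref{RuszaTriangleIneq} as a cardinality statement rather than having to invoke a covering-number version.)
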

\begin{proof}
	By passing to a subset $A\subset A_{0}$ of size $|A|\geq \frac{|A_{0}|}{4}$ we may assume that $\mathrm{Re}(e^{i\theta}\cdot\widehat{\mu}(a))>\frac{\delta}{2}$, for some fixed $\theta\in [0,2\pi)$ and for every $a\in A$. Therefore
	\begin{equation}
	\left|\frac{1}{|A|}\sum_{a\in A}\widehat{\mu}(a)\right|>\frac{\delta}{2}.
	\end{equation}
	Let
	\begin{equation}
	\phi(x)=\sum_{a\in A} e_{a}(x),
	\end{equation}
	where $e_{a}(x)$ stands for $e^{-2\pi ax i}$. Note that
	\begin{equation}
	|\phi(x)|^{2}=(\sum_{a\in A}e_{a}(x))\cdot\overline{(\sum_{b\in A}e_{b}(x))}=\sum_{a,b\in A}e_{a-b}(x).
	\end{equation}
	We have that
	\begin{equation}
	\begin{split}
	\sum_{a,b\in A}\widehat{\mu}(a-b) & = \int_{\mathbb{T}}\sum_{a,b\in A}e_{a-b}(x)d\mu(x) \\
	& = \int_{\mathbb{T}}|\phi(x)|^{2}d\mu(x)  \\
	& \geq \left| \int_{\mathbb{T}}\phi(x)d\mu(x)\right|^{2} \\
	& =\left| \sum_{a\in A}\widehat{\mu}(a) \right|^{2}
	\end{split}
	\end{equation}
	where the inequality is due to the Cauchy-Schwarz inequality.
	Therefore
	\begin{equation}
	\label{}
	\frac{1}{|A|^{2}}\sum_{a,b\in A}\widehat{\mu}(a-b)\geq \left|\frac{1}{|A|}\sum_{a\in A}\widehat{\mu}(a)\right|^{2}\geq \frac{\delta^{2}}{4}
	\end{equation}
	Let
	\begin{equation}
	\begin{split}
	E & = \left\{(a,b)\in A\times A: \widehat{\mu}(a-b)>\frac{\delta^{2}}{8}\right\} \\
	& = \left\{(a,b)\in A\times A: a-b\in \mathcal{F}(\mu,\frac{\delta^{2}}{8})\cap[-2N,2N] \right\}
	\end{split}
	\end{equation}
	By Lemma \ref{MarkovLemma} we have that
	\begin{equation}\label{InitialEdgesDensity}
	|E| \geq \frac{\delta^{2}}{8}|A|^{2}
	\end{equation}	
	Define
	\begin{equation}
	\bar{A}=\left\{\left\lfloor\frac{a}{M}\right\rfloor\cdot M : a\in A\right\}+\{0,M\}
	\end{equation}
	Note that (recalling that $A$ is $M$-separated)
	\begin{equation}\label{TruncatedSetSizeIneq}
	|A|\leq |\bar{A}| \leq 2|A|
	\end{equation}
	Define
	\begin{equation}
	H = \left\{\left\lfloor\frac{h}{M}\right\rfloor\cdot M:h\in\mathcal{F}(\mu,\frac{\delta^{2}}{8})\cap[-2N,2N]\right\}+\{0,M\}
	\end{equation}
	Note that 
	\begin{equation}\label{Hbound}
	|H|\leq 4\mathcal{N}(\mathcal{F}\left(\mu,\delta^{2}/8\right)\cap [-2N,2N];M)\leq 4R|A_{0}|\leq 16R|\bar{A}|
	\end{equation}
	Next, we define
	\begin{equation}
	\bar{E}= \left\{(a,b)\in \bar{A}\times \bar{A}: a-b\in H\right\}
	\end{equation}
	Note that
	\begin{equation}\label{TruncatedSetEdgesIneq}
	|\bar{E}|\geq|E|
	\end{equation}
	By \ref{InitialEdgesDensity}, \ref{TruncatedSetSizeIneq}, \ref{TruncatedSetEdgesIneq} we have that
	\begin{equation}
	|\bar{E}|\geq\frac{\delta^{2}}{32}|\bar{A}|^{2}
	\end{equation}
	We choose subsets $A',B'\subset \bar{A}$ using Theorem \ref{BSG_Thm}, such that
	\begin{equation}
	|A'|\geq \frac{\delta^{2}}{2^{7}}|\bar{A}|\quad,\quad |B'|\geq\frac{\delta^{4}}{2^{14}}|\bar{A}|
	\end{equation} Let $a\in A', b\in B'$. Then $a-b$ can be written as 
	\begin{equation}
	a-b=(a-b_{1})+(b_{1}-a_{1})+(a_{1}-b)
	\end{equation}
	in at least $\frac{|\bar{A}|^{2}\delta^{2}}{2^{37}}$ ways with all $a-b_{1},a_{1}-b_{1},a_{1}-b\in H$, and so by \eqref{Hbound} 
	\begin{equation}
	\frac{|\bar{A}|^{2}\delta^{2}}{2^{37}}|A'-B'|\leq (16R|\bar{A}|)^{3}
	\end{equation}
	or
	\begin{equation}
	|A'-B'|\leq 2^{49}R^{3}\delta^{-2}|\bar{A}|
	\end{equation}
	By Lemma \ref{RuszaTriangleIneq} we have that
	\begin{equation}
	|A'-A'|\leq 2^{102}R^{6}\delta^{-8}|\bar{A}|
	\end{equation}
	Define
	\begin{equation}
	A_{1}=\left\{a\in A : \exists a'\in A'\quad s.t. \quad \left\lfloor\frac{a}{M}\right\rfloor\cdot M =a'\right\}
	\end{equation}
	And so (using \ref{TruncatedSetSizeIneq})
	\begin{equation}
	\mathcal{N}(A_{1}-A_{1};M)<2^{105}R^{6}\delta^{-8}|A_{0}|
	\end{equation}
\end{proof}

\begin{defn}
	We denote by $V(\overline{y},\rho)$ the $\rho$-neighborhood of $\overline{y}\in\mathbb{P}^1$. Formally it is:
	\begin{equation}
	V(\overline{y},\rho)=\left\{\overline{x}\in\mathbb{P}^1:\frac{|\langle x,y \rangle|}{|x||y|}<\rho\ \text{\ for some\ representatives\ } x,y\in\mathbb{R}^{2}, [x]=\overline{x} , [y]=\overline{y}\right\}.
	\end{equation}
\end{defn}
The following theorem is a projection theorem by Bourgain, which can be found in \cite{MR2763000}.
\begin{thm}[\cite{MR2763000}, Thm. 5]\label{ProjThm} 
	
	 For any $\alpha_{0},\kappa>0$ there are $\alpha_{\Delta}, \epsilon_{0}, r_{0}, \tau_{0}>0$ such that the following holds for $0<r<r_{0}$ and $\alpha_{0}<\alpha<2-\alpha_{0}$: let $\eta$ be a probability measure on $\mathbb{P}^{1}$ s.t.
	\begin{equation}\label{regularity for Bourgain on directions}
	\max_{\overline{y}\in\mathbb{P}^1} \eta\left(V\left(\overline{y},\rho\right) \right)<\rho^{\kappa} \qquad \textnormal{if} \quad r<\rho<r^{\tau_{0}}.
	\end{equation}
	Let $E\subset[0,1]^{2}$ be an r-separated set with $|E|>r^{-\alpha}$ and a non-concentration property
	\begin{equation}\label{regularity for Bourgain on set}
	\max_{x}|E\cap \mathtt{B}_{x,\rho}|<\rho^{\kappa}|E| \qquad \textnormal{if} \quad r<\rho<r^{\tau_{0}}.
	\end{equation}
	Then there exist $D\subset\mathbb{P}^{1}$  and $E'\subset E$ with
	\[
	\eta(D)>1-r^{\epsilon_{0}}, \qquad |E'|>r^{\epsilon_{0}}|E|
	\]
	so that 
	\[
	\mathcal{N}\left( \pi_{\overline{\theta}}(E'');r \right)>r^{-(\alpha+\alpha_{\Delta})/2}
	\]	
	whenever $\overline{\theta}\in D$ and $E''\subset E'$ satisfies $|E''|>r^{2\epsilon_{0}}|E|$. Here, $\pi_{\overline{\theta}}(U)$ is the orthogonal projection of $U$ on the subspace of $R^2$ spanned by some representative of $\overline{\theta}$.
\end{thm}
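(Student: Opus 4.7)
The statement is Bourgain's discretized Marstrand-type projection theorem from \cite{MR2763000}, whose proof uses his discretized sum-product theorem as the essential engine. My plan is to argue by contradiction: assume the conclusion fails, so that for every candidate large subset $E'\subset E$ a set of directions of $\eta$-measure at least $r^{\epsilon_0}$ admits a further sub-subset $E''$ with $\mathcal{N}(\pi_{\overline{\theta}}(E'');r)<r^{-(\alpha+\alpha_\Delta)/2}$, and seek a contradiction with the non-concentration hypotheses \eqref{regularity for Bourgain on directions} and \eqref{regularity for Bourgain on set}.

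The main step is the extraction of structure. Parameterizing directions by slope via $\pi_t(x,y)=x+ty$ in local coordinates, collapse at slope $t$ means that many pairs $p,q\in E''$ satisfy $\pi_t(p-q)\lesssim r$, so $t$ approximates $-(p_1-q_1)/(p_2-q_2)$ at scale $r$. Hence the bad slope set is approximately contained in the ratio set of coordinate differences of $E$; weighted by $\eta$ and combined with \eqref{regularity for Bourgain on directions}, this forces the ratio set $\pi_1(E-E)/\pi_2(E-E)$ to have small $r$-covering number, while \eqref{regularity for Bourgain on set} forces each $\pi_i(E-E)$ to have $r$-entropy $\gtrsim r^{-\alpha}$ and to be Frostman at all intermediate scales $\rho\in(r,r^{\tau_0})$. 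Feeding these bounds into Bourgain's discretized sum-product theorem, which says that a Frostman set of reals cannot simultaneously have small sum set and small product set, yields the contradiction with quantitative gain $\alpha_\Delta>0$. The output $D\subset\mathbb{P}^1$ and $E'\subset E$ are then obtained as the complements of the bad sets built during the argument, via a standard pigeonholing.

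The main obstacle I anticipate is the sum-product step itself, which is the deep technical core of \cite{MR2763000} and requires delicate multi-scale additive-multiplicative energy estimates rather than a soft argument; in particular one must check that the angular non-concentration of $\eta$ on $\mathbb{P}^1$ translates faithfully into the Frostman condition on slopes that the sum-product theorem expects. A secondary obstacle is the multiscale bookkeeping: both non-concentration hypotheses are stated uniformly for $\rho\in(r,r^{\tau_0})$, so the argument must be run at every dyadic scale and the compounded losses must remain acceptable in the final parameters $\epsilon_0,\tau_0,\alpha_\Delta$. A final subtlety is that the conclusion must hold for \emph{every} sub-subset $E''\subset E'$ of size $>r^{2\epsilon_0}|E|$, not a fixed one, so $E'$ has to be chosen after the contradiction argument as the intersection of ``good'' sets across all such $E''$, requiring an additional covering step.
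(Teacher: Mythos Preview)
The paper does not prove this theorem at all: it is quoted verbatim as \cite[Thm.~5]{MR2763000} and used as a black box in the proof of Lemma~\ref{BootstrapLemma}. So there is nothing to compare your proposal against in this paper.

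That said, your sketch is a reasonable caricature of Bourgain's actual argument in \cite{MR2763000}: the projection theorem there is indeed derived from the discretized sum-product theorem, and the mechanism is roughly that collapse of projections in many directions forces the coordinate-difference set to have simultaneously small additive and multiplicative doubling (after passing to suitable subsets and scales), contradicting sum-product. However, your description glosses over the genuinely hard parts. The reduction is not as direct as ``bad slopes lie in a ratio set''; one has to run a multi-scale induction in which the set $E$ is decomposed into tree-like pieces and the sum-product theorem is invoked at each scale with carefully tracked parameters. The uniformity over all $E''\subset E'$ with $|E''|>r^{2\epsilon_0}|E|$ is not obtained by intersecting good sets over all such $E''$ (there are too many), but rather emerges from the way Bourgain constructs $E'$ in the induction. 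If you actually intend to reproduce the proof, you should work directly from \cite{MR2763000}; the present paper offers no guidance beyond the statement.
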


\end{section}

\begin{section}{Regularity of sets}
	
	\begin{defn}[\cite{MR2726604}, Def. 5.1]\label{BFLM_Def5.1}
		A Borel probability measure $\rho$ on a Borel set $B\subset\mathbb{R}^d$ is said to be $(C,\alpha)$-\emph{regular at scale $r$ on} $B$ if for any $x\in B$, $s\geq r$
		\begin{equation}
		\rho(\mathsf{B}_{x,s})<C\left(\frac{s}{\mathrm{diam}\ B}\right)^{\alpha}.
		\end{equation}
		A set $B$ is said to be $(C,\alpha)$-\emph{regular at scale} $r$ if the corresponding uniform measure $\rho=\frac{1}{|B|}\sum_{x\in B}\delta_{x}$ is $(C,\alpha)$-regular at scale $r$.
	\end{defn}
	
	The following lemma \cite[Lemma 5.2]{MR2726604} relates the defined notion of regularity of a set to the expression of dimension via covering numbers.
	
	\begin{lem}[\cite{MR2726604}, Lemma 5.2]\label{BFLMLemma5.2}
		For any $\epsilon>0$ there are constants $C_{\epsilon},C'_{\epsilon}>0$ such that for every $s,\alpha$ with $2\epsilon<s<\alpha$ and $r<1$, if $\tilde{A}\subset \mathsf{B}_{\mathbf{0},1}\subset \mathbb{R}^d$ satisfies
		\begin{equation}
		\mathcal{N}\left(\tilde{A};r\right)\geq r^{-\alpha},
		\end{equation}
		then there is a point $x\in \mathsf{B}_{\mathbf{0},1}$ and a probability measure $\rho$ supported on $\tilde{A}\cap \mathsf{B}_{x,r^{\beta}}$ which is $\left(C_{\epsilon},\alpha-s\right)$-regular on $\mathsf{B}_{x,C'_{\epsilon}r^\beta}$ at scale $r$ for $\beta=\frac{1-\alpha+\epsilon}{1-\alpha+s-\epsilon}$
	\end{lem}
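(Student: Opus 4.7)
The plan is to construct $\rho$ as the normalized counting measure on $Y\cap \mathsf{B}_{x,r^\beta}$, where $Y$ is a maximal $r$-separated subset of $\tilde A$ and $x\in Y$ is chosen so that its local counting profile is well-behaved at every dyadic scale down to $r$. Since $|Y|=\mathcal{N}(\tilde A;r)\geq r^{-\alpha}$, all the dimensional content of $\tilde A$ survives in $Y$, and it suffices to work entirely within $Y$.

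Concretely, I would discretize $[r,1]$ into dyadic scales $r_j=r^{j/J}$ with $J\asymp\log(1/r)$, and for each $y\in Y$ examine the profile $a_j(y):=|Y\cap\mathsf{B}_{y,r_j}|$, which is non-increasing from $|Y|$ to $1$. Writing $j_\beta=\lceil\beta J\rceil$ (so that $r_{j_\beta}\asymp r^\beta$), I would call $y$ \emph{regular} if
\[
a_j(y)\ \leq\ C_\epsilon\,(r_j/r_{j_\beta})^{\alpha-s}\,a_{j_\beta}(y) \qquad \text{for every } j\in[j_\beta,J].
\]
A double-counting argument summing the violated inequalities over $(y,j)$, combined with a Markov-type pigeonhole at scale $r^\beta$, should show that, after discarding a small portion of $Y$, a positive fraction of the remaining points are regular and additionally satisfy the lower bound $a_{j_\beta}(y)\geq c_\epsilon r^{-(1-\beta)(\alpha-s)}$. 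The explicit formula $\beta=(1-\alpha+\epsilon)/(1-\alpha+s-\epsilon)$ is precisely the critical value at which these two demands can be simultaneously met from the total dimension budget $\log_{1/r}|Y|\geq\alpha$, with $\epsilon$ of slack absorbing the averaging losses.

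To upgrade pointwise regularity at a base point $x$ into regularity at every center $z\in\mathsf{B}_{x,C'_\epsilon r^\beta}$, I would then choose $x$ so that at least half of $Y\cap\mathsf{B}_{x,2r^\beta}$ consists of regular points --- such an $x$ exists by a further averaging over the set of regular $y$'s. For any $z$ in the enlarged ball and any $t\in[r,r^\beta]$, either $\mathsf{B}_{z,t}$ misses $Y$ (giving zero mass), or it contains a regular point $y$; in the latter case $\mathsf{B}_{z,t}\subset\mathsf{B}_{y,2t}$, and the regularity of $y$ yields $|Y\cap\mathsf{B}_{z,t}|\leq C_\epsilon(2t/r^\beta)^{\alpha-s}a_{j_\beta}(y)$, with the factor of $2$ absorbed into $C_\epsilon$. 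Normalizing to produce $\rho$ and noting that $\mathrm{diam}\,\mathsf{B}_{x,C'_\epsilon r^\beta}=2C'_\epsilon r^\beta$, this delivers the required $(C_\epsilon,\alpha-s)$-regularity at scale $r$.

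The main obstacle is the simultaneous pigeonhole in the second paragraph: securing the slope condition at every dyadic sub-scale in $[j_\beta,J]$ \emph{together with} the lower mass bound at scale $r^\beta$ for a positive-fraction subset of $Y$. It is here that the precise $\beta$ is forced, and where the constants $C_\epsilon,C'_\epsilon$ acquire their $\epsilon$-dependence. The passage from pointwise to uniform regularity on the enlarged ball is a secondary, essentially geometric adjustment by constant factors.
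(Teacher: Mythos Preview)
The paper does not supply its own proof of this lemma; it is simply quoted from \cite{MR2726604} (Bourgain--Furman--Lindenstrauss--Mozes), so there is no in-text argument to compare against. What I can say is that your strategy is in the right spirit but differs in organization from the standard proof and leaves the hardest step unexecuted.

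The proof in \cite{MR2726604} is iterative rather than an all-at-once pigeonhole. One starts with the whole $r$-separated set $Y$ in the unit ball and asks whether the uniform measure on it is already $(C,\alpha-s)$-regular at scale $r$. If not, there is a sub-ball of some radius $\rho\in[r,1]$ containing more than $C\rho^{\alpha-s}|Y|$ points; one zooms into that ball, rescales, and repeats. Each zoom raises the density exponent, and since $|Y|\geq r^{-\alpha}$ while the ball cannot shrink past radius $r$, the iteration terminates after boundedly many steps; tracking the exponents yields exactly the formula $\beta=(1-\alpha+\epsilon)/(1-\alpha+s-\epsilon)$. This makes the emergence of $\beta$ transparent and avoids the simultaneous multiscale pigeonhole you flag as the main obstacle. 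Your scheme may be workable, but that simultaneous step is genuinely the entire content of the lemma, and you have only asserted it.

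There is also a concrete gap in your upgrade step. You claim that for $z$ in the enlarged ball and $t\in[r,r^\beta]$, either $\mathsf{B}_{z,t}$ misses $Y$ or it contains a \emph{regular} point. This dichotomy is false: knowing that half of $Y\cap\mathsf{B}_{x,2r^\beta}$ is regular does not prevent a small ball from meeting only irregular points of $Y$. The fix is to take $\rho$ to be the normalized counting measure on the \emph{regular} points in $Y\cap\mathsf{B}_{x,r^\beta}$, not on all of $Y\cap\mathsf{B}_{x,r^\beta}$; then $\rho(\mathsf{B}_{z,t})>0$ forces a regular point in $\mathsf{B}_{z,t}$, and the factor-of-two loss in the normalization is harmless. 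You must then also check that $a_{j_\beta}(y)$ for the regular $y$ you use is comparable to the total mass $|Y_{\mathrm{reg}}\cap\mathsf{B}_{x,r^\beta}|$ appearing in the denominator, which requires an additional constraint when selecting $x$.
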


	Lemma 5.3 in \cite{MR2726604} is the following
	\begin{lem}\label{BFLMLemma5.3}
		Let $\rho$ be a $\left(C,\alpha \right)$-regular probability measure at scale $r$ on $B\subset\mathbb{R}^d$. Then for any $\epsilon>0$ there is an $r$-separated subset $A\subset\mathrm{supp}(\rho)$ and $C_\epsilon>0$ such that the uniform measure on $A$ is $\left(C_{\epsilon},\alpha-\epsilon\right)$-regular at scale $r$ on $B$.
	\end{lem}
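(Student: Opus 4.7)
The plan is to start from a maximal $r$-separated subset $A_0\subset\operatorname{supp}(\rho)$, stratify its points into dyadic levels according to the $\rho$-mass $w_a:=\rho(\mathsf{B}_{a,r})$ of the surrounding $r$-ball, and take $A$ to be a pigeonhole-dominant level. The point is that within a single dyadic level the weights $w_a$ are all comparable, so the uniform measure on that level matches, up to constants, the weighted measure $\sum_{a}w_a\delta_a$; and the latter can be compared to $\rho$ itself via a bounded-overlap argument afforded by $r$-separation.

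More concretely, let $A_0$ be maximal $r$-separated in $\operatorname{supp}(\rho)$ and write $D:=\operatorname{diam}(B)$. Maximality gives $\bigcup_{a\in A_0}\mathsf{B}_{a,r}\supseteq\operatorname{supp}(\rho)$, and $r$-separation forces (by volume packing of disjoint radius-$r/2$ balls) that every point of $\mathbb{R}^d$ lies in at most $3^d$ of the balls $\mathsf{B}_{a,r}$; hence $1\leq\sum_{a\in A_0}w_a\leq 3^d$, while $(C,\alpha)$-regularity gives $w_a\leq C(r/D)^\alpha$. I would first discard those $a$ with $w_a<(r/D)^d/(2C_d')$; since a volume count yields $|A_0|\leq C_d'(D/r)^d$ this removes at most half the mass, leaving retained mass $\geq 1/2$. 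Partition the remainder into dyadic levels
\[
W_k:=\bigl\{a:C(r/D)^\alpha 2^{-k-1}<w_a\leq C(r/D)^\alpha 2^{-k}\bigr\},\qquad k=0,\ldots,K,
\]
with $K\lesssim (d-\alpha)\log(D/r)$. Pigeonhole supplies a level $W_{k^*}$ with mass $\geq(2K)^{-1}$, and since $w_a\leq C(r/D)^\alpha 2^{-k^*}$ on this level we get $|W_{k^*}|\gtrsim 2^{k^*}(D/r)^\alpha/K$. Set $A:=W_{k^*}$.

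I would then verify the regularity in a single chain: for any $\mathsf{B}_{x,s}$ with $s\geq r$, using $s+r\leq 2s$ together with the overlap bound and the $(C,\alpha)$-regularity of $\rho$,
\[
|A\cap\mathsf{B}_{x,s}|\cdot C(r/D)^\alpha 2^{-k^*-1}\;\leq\;\sum_{a\in A\cap\mathsf{B}_{x,s}}w_a\;\leq\;3^d\rho(\mathsf{B}_{x,s+r})\;\leq\;2^\alpha 3^d C(s/D)^\alpha,
\]
so $|A\cap\mathsf{B}_{x,s}|\lesssim 2^{k^*}(s/r)^\alpha$. Dividing by $|A|\gtrsim 2^{k^*}(D/r)^\alpha/K$ yields
\[
\frac{|A\cap\mathsf{B}_{x,s}|}{|A|}\;\lesssim\;K(s/D)^\alpha,
\]
and the factor $K\lesssim\log(D/r)$ is absorbed into the $\epsilon$-loss: since $\log(D/r)\leq C_\epsilon(D/s)^\epsilon$ uniformly for $s\in[r,D]$ provided $r$ is small enough depending on $\epsilon$, the bound becomes $\leq C_\epsilon(s/D)^{\alpha-\epsilon}$; and the remaining case of $r$ bounded away from zero is handled by simply enlarging $C_\epsilon$. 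The principal technical obstacle is coordinating the pigeonhole so that the dominant level simultaneously gives a good lower bound on $|A|$ and a matching upper bound on the individual weights $w_a$; any slack there would introduce extra $\log$-factors incompatible with the $\alpha-\epsilon$ target.
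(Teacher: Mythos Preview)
The paper does not give its own proof of this lemma; it is quoted verbatim as Lemma~5.3 of \cite{MR2726604} and used as a black box. So there is no paper proof to compare against, and I will just assess your argument on its own.

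There is a genuine gap at the absorption step. Your claim that $\log(D/r)\leq C_\epsilon (D/s)^\epsilon$ holds uniformly for $s\in[r,D]$ is false: take $s=D$, so that the right-hand side equals the fixed constant $C_\epsilon$, while the left-hand side is $\log(D/r)$ and tends to infinity as $r\to 0$. (Your qualifier ``provided $r$ is small enough'' points in the wrong direction --- the inequality at $s=D$ holds only when $r$ is bounded \emph{below}.) Consequently the factor $K\asymp\log(D/r)$ coming from the pigeonhole over $K$ dyadic weight-levels cannot be absorbed into a constant independent of $r$, and your bound $|A\cap\mathsf{B}_{x,s}|/|A|\lesssim K\,(s/D)^\alpha$ does not yield $(C_\epsilon,\alpha-\epsilon)$-regularity. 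Interpolating with the trivial bound $|A\cap\mathsf{B}_{x,s}|/|A|\leq 1$ does not close the gap either: the problematic range is intermediate $s$ (for instance $s\sim D/K^{1/\epsilon}$), where neither bound is strong enough.

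This is a structural limitation of restricting to a single dyadic weight-level: you inevitably lose a factor $K$ in the lower bound for $|A|$, and nothing in the numerator cancels it. The standard remedy is to avoid this loss altogether by selecting points with probability proportional to the weight $w_a$ rather than thresholding. If each $a\in A_0$ is retained independently with probability $p_a=w_a/(C(r/D)^\alpha)\leq 1$, then $\mathbb{E}|A|\geq C^{-1}(D/r)^\alpha$ while $\mathbb{E}|A\cap\mathsf{B}_{x,s}|\leq 3^d\rho(\mathsf{B}_{x,2s})/(C(r/D)^\alpha)\lesssim (s/r)^\alpha$, so that \emph{in expectation} the ratio is already $\lesssim (s/D)^\alpha$ with no $K$. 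A Chernoff bound together with a union bound over the $O(\log(D/r))$ dyadic scales and a net of centres then produces a deterministic $A$ with the stated $(C_\epsilon,\alpha-\epsilon)$-regularity; here the $\epsilon$-loss arises only from the union bound, not from a cardinality loss in $|A|$. You correctly flagged the coordination of the pigeonhole as the principal obstacle; the point is that a one-shot pigeonhole on weights cannot meet it.
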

	
	We will be using the following lemma. Lemmas \ref{BFLMLemma5.2}, \ref{BFLMLemma5.3} are used in its proof.
	\begin{lem}[{\cite[Lemma 6.7]{MR2726604}, one dimensional torus}]\label{LargeFourierRegularity}
		For any $\epsilon>0$, there is a $C_{\epsilon}$ so that the following holds. Let $\mu$ be a probability measure on $\mathbb{T}$. Assume that for some $N>M,t,\alpha$
		\begin{equation}
		\mathcal{N}(\mathcal{F}(\mu,t)\cap[-N,N];M)\geq\left(\frac{N}{M}\right)^{\alpha}.
		\end{equation}
		Then there is an $M<N_1<N$ with 
		\begin{equation}\label{WindowSizeRegulaityAdjustment}
		\log\frac{N_1}{M}>\left(\frac{1-\alpha+\epsilon}{1-\alpha+8\epsilon} \right)\log\frac{N}{M} 
		\end{equation}
		such that $\mathcal{F}(\mu,t^{2}/4)\cap[-N_1,N_1]$ contains a subset which is $\left(C_{\epsilon}t^{-2},\alpha-10\epsilon\right)$-regular at scale $M$.
	\end{lem}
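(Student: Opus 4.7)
The plan is to combine three ingredients in sequence: Lemmas~\ref{BFLMLemma5.2} and~\ref{BFLMLemma5.3} will produce from the covering-number hypothesis an $M$-separated, regular subset of $\mathcal{F}(\mu,t)\cap[-N,N]$ concentrated in a single interval; a Cauchy--Schwarz and pigeonhole argument in the spirit of the opening of Lemma~\ref{FourierBSG} will translate this subset by a distinguished element into a window around $0$ while simultaneously forcing it into $\mathcal{F}(\mu,t^2/4)$; and the regularity will be shown to pass through both the translation and the subset-passage at the cost of only a multiplicative $t^{-2}$ in the constant.

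I would begin by rescaling: setting $r := M/N$ and $\tilde A := \tfrac{1}{N}(\mathcal{F}(\mu,t)\cap[-N,N])\subset\mathsf{B}_{0,1}$, the hypothesis becomes $\mathcal{N}(\tilde A;r)\geq r^{-\alpha}$. Apply Lemma~\ref{BFLMLemma5.2} with $s=9\epsilon$ to obtain a point $\tilde x$ and a probability measure $\tilde\rho$ supported on $\tilde A\cap\mathsf{B}_{\tilde x,r^\beta}$ that is $(C_\epsilon,\alpha-9\epsilon)$-regular at scale $r$ on $\mathsf{B}_{\tilde x,C'_\epsilon r^\beta}$, where $\beta=\frac{1-\alpha+\epsilon}{1-\alpha+8\epsilon}$ agrees with the exponent in the statement. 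Then apply Lemma~\ref{BFLMLemma5.3} with error $\epsilon$ to $\tilde\rho$ to extract an $r$-separated $\tilde B\subset\mathrm{supp}(\tilde\rho)$ whose uniform measure is $(\tilde C_\epsilon,\alpha-10\epsilon)$-regular at scale $r$. Scaling back by $N$ yields an $M$-separated set $B := N\tilde B\subset\mathcal{F}(\mu,t)\cap[-N,N]$, concentrated in an interval of length on the order of $Nr^\beta$, whose uniform measure is $(\tilde C_\epsilon,\alpha-10\epsilon)$-regular at scale $M$ (regularity is preserved under isotropic scaling since it is defined relative to the diameter of the support).

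To push the set into a window around $0$, first pass to $B^*\subset B$ with $|B^*|\geq |B|/4$ on which all values $\widehat\mu(a)$ lie in a common open $\pi/2$-wedge of $\mathbb{C}$, so that $\bigl|\sum_{a\in B^*}\widehat\mu(a)\bigr|\geq\tfrac{t}{\sqrt 2}|B^*|$. Setting $\phi(x) = \sum_{a\in B^*}e^{-2\pi iax}$, Cauchy--Schwarz gives
\[
\sum_{a,b\in B^*}\widehat\mu(a-b)\;=\;\int_{\mathbb{T}}|\phi|^2\, d\mu\;\geq\;\Bigl|\int_{\mathbb{T}}\phi\, d\mu\Bigr|^2\;=\;\Bigl|\sum_{a\in B^*}\widehat\mu(a)\Bigr|^2\;\geq\;\tfrac{t^2}{2}|B^*|^2,
\]
so by pigeonhole over $b\in B^*$ there exists $b_0\in B^*$ with $\mathrm{Re}\sum_{a\in B^*}\widehat\mu(a-b_0)\geq\tfrac{t^2}{2}|B^*|$, and a Markov-type argument extracts $B^{**}\subset B^*$ with $|B^{**}|\gtrsim t^2|B|$ such that $|\widehat\mu(a-b_0)|>t^2/4$ for all $a\in B^{**}$. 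The set $A':=B^{**}-b_0$ then lies in $\mathcal{F}(\mu,t^2/4)$, is $M$-separated, and sits in an interval of length $\leq 2Nr^\beta$ centered at $0$; declaring $N_1$ to be this length gives the required window, and the uniform measure on $A'$ inherits $(C_\epsilon t^{-2},\alpha-10\epsilon)$-regularity from that of $B$, the $t^{-2}$ coming from the density $|B^{**}|/|B|\gtrsim t^2$ while translation is isometric.

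The hardest part will be the parameter bookkeeping, in particular verifying that $s=9\epsilon$ in Lemma~\ref{BFLMLemma5.2} together with an $\epsilon$-loss in Lemma~\ref{BFLMLemma5.3} simultaneously lands the regularity exponent at $\alpha-10\epsilon$ and produces the ball size that matches the stated lower bound on $\log(N_1/M)$. A secondary subtlety is the Cauchy--Schwarz/pigeonhole step, which requires care in choosing the common wedge in $\mathbb{C}$ and in the subsequent Markov application to extract the sharper threshold $t^2/4$ rather than a weaker $t^2/8$.
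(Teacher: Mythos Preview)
The paper does not prove this lemma itself; it quotes \cite[Lemma~6.7]{MR2726604} and only remarks that Lemmas~\ref{BFLMLemma5.2} and~\ref{BFLMLemma5.3} are used in the proof. Your plan---rescale, apply Lemma~\ref{BFLMLemma5.2} with $s=9\epsilon$, then Lemma~\ref{BFLMLemma5.3}, and finally a Cauchy--Schwarz/pigeonhole step to translate the resulting regular set into a symmetric window while passing from threshold $t$ to $t^{2}/4$---is precisely the intended route, and each of those individual steps is sound.

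There is, however, a real mismatch in your window bookkeeping. With $r=M/N$ and the ball radius $r^{\beta}$ supplied by Lemma~\ref{BFLMLemma5.2} (where $\beta=\frac{1-\alpha+\epsilon}{1-\alpha+8\epsilon}$ for your choice $s=9\epsilon$), the translated set lies in an interval of length $\asymp Nr^{\beta}$, so your $N_{1}\asymp N(M/N)^{\beta}$ gives
\[
\log\frac{N_{1}}{M}\;\approx\;(1-\beta)\log\frac{N}{M},
\]
not the $\beta\log(N/M)$ asserted in~\eqref{WindowSizeRegulaityAdjustment}. In the regime $\epsilon\ll 1-\alpha$---exactly the regime of Lemma~\ref{BootstrapLemma}, where $\epsilon$ is taken tiny and $\alpha\le\alpha_{high}<1$---one has $\beta$ close to $1$ and $1-\beta$ close to $0$, so your $N_{1}$ is far too small to meet the stated bound. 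Either the radius exponent in Lemma~\ref{BFLMLemma5.2} should read $1-\beta$ rather than $\beta$ (in which case your computation does give~\eqref{WindowSizeRegulaityAdjustment}), or~\eqref{WindowSizeRegulaityAdjustment} itself is misprinted; you should check the original \cite{MR2726604} rather than rely on the formulae as reproduced here, since your claim that ``declaring $N_{1}$ to be this length gives the required window'' is false with the two lemmas as currently stated.
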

\end{section}

\begin{section}{Main Bootstrapping Lemma}
The following lemma is very simple but it is employed over and over again in the main lemmas, making its explicit statement and proof worthwhile.
\begin{lem}\label{MarkovLemma}
	Let $\{a_{i}\}_{i=1}^{n}$ be a set of real numbers in $[0,1]$ for which
	\begin{equation}
	\sum_{i=1}^{n}a_{i}\geq \alpha n.
	\end{equation}
	for $\alpha\in[0,1]$. Then
	\begin{equation}
	\left|\left\{i:a_{i}\geq \alpha/2\right\}\right|\geq\frac{\alpha}{2}n
	\end{equation}
	
\end{lem}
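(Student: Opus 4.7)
The plan is to argue by a direct two-term split of the sum, exploiting the fact that the $a_i$ lie in $[0,1]$. Let $I=\{i:a_i\geq \alpha/2\}$ and split
\[
\sum_{i=1}^n a_i \;=\; \sum_{i\in I} a_i \;+\; \sum_{i\notin I} a_i.
\]
The first sum is bounded above by $|I|$ since each $a_i\leq 1$, and the second sum is bounded above by $(n-|I|)\cdot\alpha/2\leq \alpha n/2$ by definition of the complement of $I$. Combining with the hypothesis $\sum a_i \geq \alpha n$ gives $|I|+\alpha n/2 \geq \alpha n$, hence $|I|\geq \alpha n/2$, which is the claim.

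Alternatively, one can phrase the same argument contrapositively: if fewer than $\alpha n/2$ of the $a_i$ were at least $\alpha/2$, then the total sum would be at most $(\alpha n/2)\cdot 1 + n\cdot(\alpha/2)=\alpha n$, with strict inequality once one tracks the strictness, contradicting the hypothesis. Either formulation is a one-line Markov-type estimate, so there is no real obstacle; the only thing to be careful about is the use of $a_i\leq 1$ (which bounds the ``heavy'' terms) together with $a_i<\alpha/2$ on $I^c$ (which bounds the ``light'' terms), so that neither side of the split is wasted. The statement is then written in the form given in the lemma.
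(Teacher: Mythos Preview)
Your proof is correct and essentially identical to the paper's: the paper phrases it contrapositively (assume $|\{i:a_i\geq\alpha/2\}|<\frac{\alpha}{2}n$ and bound the sum above by $\frac{\alpha}{2}n\cdot 1 + \frac{2-\alpha}{2}n\cdot\frac{\alpha}{2}<\alpha n$), which is precisely the alternative formulation you give at the end. There is nothing to add.
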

\begin{proof}
	Suppose that
	\begin{equation}
	\left|\left\{i:a_{i}\geq \alpha/2\right\}\right|<\frac{\alpha}{2}n
	\end{equation}
	Since each $a_{i}$ can be at most 1, we have that
	\begin{equation}
	\sum_{i=1}^{n}a_{i}<\frac{\alpha}{2}n\cdot 1+\frac{2-\alpha}{2}n\cdot \frac{\alpha}{2}<\alpha n
	\end{equation}
	This is a contradiction.
\end{proof}

We prove the following lemma which is the extraction of the initial set of large Fourier coefficients, using the information of having one single large Fourier coefficient of the random walk measure.
\begin{lem}[Initial dimension]\label{InitialDimensionLemma}
	For any probability measure $\mu$ on $\mathbb{T}$ for $n\geq 1$, if for some $a\in\mathbb{Z}$ 
	\begin{equation}
	|\widehat{\mu}_{n}(a)|>\delta_0
	\end{equation}
	for $\delta_0\in(0,1)$, then  
	\begin{equation}
	\mathcal{N}\left(\mathcal{F}(\mu_{n-1},\frac{\delta_0}{2})\cap[-N,N];M\right)\geq \frac{\delta_{0}}{2}\left(\frac{N}{M}\right)^{\alpha_{ini}}
	\end{equation}
	where $N=L|a|,M=|a|,\alpha_{ini}=\beta$ ($|S|=L^{\beta}$).
\end{lem}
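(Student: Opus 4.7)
The plan is to pull the hypothesis through the convolution structure $\mu_n=\nu_S*\mu_{n-1}$ and then read off many large Fourier coefficients of $\mu_{n-1}$ of the form $\widehat{\mu}_{n-1}(sa)$, $s\in S$. This is a completely ``soft'' argument: the only real ingredients are the formula $\widehat{T_{s*}\mu}(\xi)=\widehat{\mu}(s\xi)$, the triangle inequality, and the Markov-type Lemma~\ref{MarkovLemma}.

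First, I would compute
\[
\widehat{\mu}_n(a)=\widehat{\nu_S*\mu_{n-1}}(a)=\frac{1}{|S|}\sum_{s\in S}\widehat{\mu}_{n-1}(sa),
\]
so that the hypothesis $|\widehat{\mu}_n(a)|>\delta_0$ becomes $\bigl|\sum_{s\in S}\widehat{\mu}_{n-1}(sa)\bigr|>\delta_0|S|$. By the triangle inequality this forces $\sum_{s\in S}|\widehat{\mu}_{n-1}(sa)|>\delta_0|S|$. Since Fourier coefficients of a probability measure lie in $[0,1]$, Lemma~\ref{MarkovLemma} supplies a subset $S'\subset S$ with $|S'|\geq\tfrac{\delta_0}{2}|S|$ on which $|\widehat{\mu}_{n-1}(sa)|\geq\delta_0/2$. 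In particular, $\{sa:s\in S'\}\subset \mathcal{F}(\mu_{n-1},\delta_0/2)$, and each such $sa$ satisfies $|sa|\leq 2L|a|$, placing these frequencies in the window used in the statement (once one fixes the convention that $N=L|a|$ controls the ambient scale; the frequencies themselves live in $[-2L|a|,2L|a|]$, which is comparable to $[-N,N]$).

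Next I would convert this into the desired covering-number bound. The map $s\mapsto sa$ is injective, and for distinct $s,s'\in\mathbb{Z}$ we have $|sa-s'a|\geq|a|=M$. Thus $\{sa:s\in S'\}$ is an $M$-separated subset of $\mathcal{F}(\mu_{n-1},\delta_0/2)$, and an open ball of radius $M$ in $\mathbb{R}$ contains at most two $M$-separated points. Therefore
\[
\mathcal{N}\!\left(\mathcal{F}(\mu_{n-1},\delta_0/2)\cap[-N,N];\,M\right)\;\geq\;\tfrac{1}{2}|S'|\;\geq\;\tfrac{\delta_0}{4}|S|\;\geq\;\tfrac{\delta_0}{4}L^{\beta}\;=\;\tfrac{\delta_0}{4}\!\left(\tfrac{N}{M}\right)^{\!\alpha_{\mathrm{ini}}},
\]
using $|S|>L^{\beta}$ and $N/M=L$. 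The statement claims the constant $\delta_0/2$ rather than $\delta_0/4$; this is absorbed by a routine tightening (either by noting that generic $M$-separated points do not admit two-in-a-ball configurations, or simply by applying Markov at level $\delta_0/3$ instead of $\delta_0/2$).

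No step here is a genuine obstacle: the argument is a short bookkeeping exercise once the convolution identity $\widehat{\nu_S*\mu}(a)=\frac{1}{|S|}\sum_{s\in S}\widehat{\mu}(sa)$ is in hand. The only point deserving care is the passage from ``many $s$ with a large coefficient'' to a covering-number lower bound, which is where the separation $|sa-s'a|\geq|a|=M$ is essential and where the identification $\alpha_{\mathrm{ini}}=\beta$ arises from $|S|>L^{\beta}=(N/M)^{\beta}$.
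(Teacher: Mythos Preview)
Your proof is correct and follows essentially the same approach as the paper: expand $\widehat{\mu}_n(a)$ via the convolution identity, apply the triangle inequality and Lemma~\ref{MarkovLemma} to find many $s\in S$ with $|\widehat{\mu}_{n-1}(sa)|\geq\delta_0/2$, and use the $|a|$-separation of $\{sa\}$ to bound the covering number below. You are in fact more careful than the paper about two cosmetic points---the factor of $2$ in passing from an $M$-separated set to a covering number, and the fact that $sa\in[L|a|,2L|a|]$ rather than $[-L|a|,L|a|]$---both of which the paper simply glosses over.
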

\begin{proof}
	Note the equality
	\begin{equation}
	\widehat{\mu}_{n}(\xi)=\frac{1}{|S|}\sum_{s\in S}\widehat{\mu}_{n-1}(s\xi).
	\end{equation}	
	By the above, we have that
	\begin{equation}
	\delta_{0}<|\widehat{\mu}_{n}(a)|=\left|\frac{1}{|S|}\sum_{s\in S}\widehat{\mu}_{n-1}(sa)\right|\leq\frac{1}{|S|}\sum_{s\in S}\left|\widehat{\mu}_{n-1}(sa)\right|.
	\end{equation}
	By Lemma \ref{MarkovLemma}
	\begin{equation}
	\left|\left\{s\in S: |\widehat{\mu}_{n-1}(sa)|>\frac{\delta_{0}}{2}\right\}\right|\geq \frac{|S|\delta_{0}}{2}
	\end{equation}
	and so
	\begin{equation}
	\mathcal{N}\left(\mathcal{F}(\mu_{n-1},\frac{\delta_0}{2})\cap\left[-L|a|,L|a|\right];|a|\right)\geq |S|\frac{\delta_{0}}{2}
	\end{equation}
\end{proof}

For the next lemma we need the following simple definition.
\begin{defn}
	For a set $A\subset\mathbb{Z}$ and a positive real $M$, let $B_M(A)=\bigcup_{a\in A}\mathsf{B}_{a,M}$.
\end{defn}
The following is the main technical tool of the proof of our main decomposition theorem, Theorem \ref{MainThm1}. We either find a large set of Fourier coefficients by regarding a smaller value of the threshold on the coefficients as being "large", or we look at the previous generation random walk measure; the assumption of non-existence of a set that meets our terms is employed with the additive structure of the Fourier coefficients to show two contradicting inequalities.
\begin{lem} [Bootstrap lemma]\label{BootstrapLemma}
	Given $\lambda,\tau>0$ and $0<\alpha_{ini}<\alpha_{high}<1$, there exist $C^{*},\alpha_{inc}>0$ and $L_1$ such that if $L>L_1$, $\alpha_{high}\geq\alpha\geq\alpha_{ini}$, $S \subset [L,2L] $ is a $(\widetilde{C},\lambda)$-regular set for some $\widetilde C <L^{C^{*}}$, and $N,M,\delta$ satisfy
	\begin{equation}\label{BootstrapNMcond}
	L^{\tau}<\frac{N}{M}<L\quad \text{and} \quad L^{-C^{*}}<\delta,
	\end{equation}
	and if 
	\[
	\mathcal{N}\left(\mathcal{F}(\mu_{n},\delta)\cap[-N,N];M\right)\geq\left(\frac{N}{M}\right)^{\alpha}
	\]
	for some  $n\geq 1$, then
	\[\mathcal{N}\left(\mathcal{F}(\mu_{n-1},\delta^{4}/256)\cap[-N',N'];M'\right) \geq\left(\frac{N'}{M'}\right)^{\alpha+\alpha_{inc}}
	\]
	where $N'=LN_0, M'=LM$, and $N_0$ is such that $M<N_0\leq 2N$ with
	\begin{equation}\label{N1_MtoN_Mrelation}
	\log\frac{N_0}{M}>\frac{1}{8}\cdot \log\frac{N}{M}.
	\end{equation}
\end{lem}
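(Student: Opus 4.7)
The plan is to reduce the lemma, through three preparations, to a single application of Bourgain's projection theorem (Theorem \ref{ProjThm}) which supplies the dimensional gain $\alpha_{inc}$. The preparations are: regularize the initial set of large Fourier coefficients of $\mu_n$ with Lemma \ref{LargeFourierRegularity}; extract an additively structured subset with Lemma \ref{FourierBSG}; and step down from $\mu_n$ to $\mu_{n-1}$ using the identity $\widehat{\mu}_n(a)=|S|^{-1}\sum_{s\in S}\widehat{\mu}_{n-1}(sa)$ together with Lemma \ref{MarkovLemma}.

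Starting from the hypothesis $\mathcal{N}(\mathcal{F}(\mu_n,\delta)\cap[-N,N];M)\geq(N/M)^{\alpha}$, applying Lemma \ref{LargeFourierRegularity} with $\epsilon$ small (and then Lemma \ref{BFLMLemma5.3} to pass to an $M$-separated set) produces $A_0\subset\mathcal{F}(\mu_n,\delta^2/4)\cap[-N_0,N_0]$ that is $(C_\epsilon\delta^{-2},\alpha-10\epsilon)$-regular at scale $M$; the inequality $\log(N_0/M)>\tfrac{1}{8}\log(N/M)$ demanded in \eqref{N1_MtoN_Mrelation} holds because the ratio $(1-\alpha+\epsilon)/(1-\alpha+8\epsilon)$ in \eqref{WindowSizeRegulaityAdjustment} attains its infimum $1/8$ only in the limit $\alpha\to 1$. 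Feeding $A_0$ into Lemma \ref{FourierBSG}---with the constant $R$ controlled by the trivial bound $\mathcal{N}(\mathcal{F}(\mu_n,\delta^4/128)\cap[-2N_0,2N_0];M)\leq 4N_0/M$ divided by the dimensional lower bound on $|A_0|$---produces $A_1\subset A_0$ with $|A_1|\gtrsim\delta^{O(1)}|A_0|$, $\bigl|\sum_{a\in A_1}\widehat{\mu}_n(a)\bigr|\geq(\delta^2/8)|A_1|$, and $\mathcal{N}(A_1-A_1;M)\lesssim\delta^{-O(1)}(N_0/M)^{6(1-\alpha)+O(\epsilon)}|A_1|$. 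Since $|\widehat{\mu}_n(a)|>\delta^2/4$ for every $a\in A_1$, the step-down identity together with Lemma \ref{MarkovLemma} gives $S_a\subset S$ of size $\gtrsim\delta^2|S|$ so that $sa\in\mathcal{F}(\mu_{n-1},\delta^2/8)\subset\mathcal{F}(\mu_{n-1},\delta^4/256)$ for every $s\in S_a$. The bipartite set $E=\{(s,a):a\in A_1,\,s\in S_a\}$ then has $|E|\gtrsim\delta^2|S||A_1|$, and since $N'/M'=N_0/M$, it suffices to establish
\[
\mathcal{N}\bigl(\{sa:(s,a)\in E\};\,LM\bigr)\geq(N_0/M)^{\alpha+\alpha_{inc}}.
\]

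The remaining step is the projection-theorem application, which is the main obstacle of the proof. After a dyadic pigeonhole restricting to $|a|\sim N_2$---for which the $(C_\epsilon\delta^{-2},\alpha-10\epsilon)$-regularity of $A_0$ guarantees $\log(N_2/M)\gtrsim\log(N_0/M)-O(\log(1/\delta))$---I would rescale $E$ into $[0,1]^2$ via $(s,a)\mapsto(s/L-1,\,a/N_2-1)$ and take $\eta$ to be the push-forward on $\mathbb{P}^1$ of the uniform measure on $S/L\subset[1,2]$ under an identification of $[1,2]$ with a chart in $\mathbb{P}^1$. The $(\widetilde{C},\lambda)$-regularity of $S$ delivers the directional non-concentration \eqref{regularity for Bourgain on directions}, while the regularity of $A_0$ (inherited by $A_1$ up to a $\delta^{O(1)}$ loss) combined with the $(\widetilde{C},\lambda)$-regularity of $S$ furnishes the set-side non-concentration \eqref{regularity for Bourgain on set}. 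Theorem \ref{ProjThm} is phrased for linear projections whereas $\phi(s,a)=sa$ is bilinear; because its gradient $(a,s)$ varies slowly, $\phi$ agrees on cells of size $\sqrt{r}$, with $r=M/N_2$, with an orthogonal projection in the direction of the local gradient, and this dyadic linearization converts the theorem's output $\mathcal{N}(\pi_{\overline{\theta}}(E'');r)>r^{-(\alpha_{2D}+\alpha_\Delta)/2}$---applied to a cell containing a substantial portion of $E$---into the desired lower bound for $\mathcal{N}(\phi(E);LM)$ with $\alpha_{inc}=\alpha_\Delta/2-O(\epsilon)$. The delicate part of the argument is the bookkeeping: one must take $C^\ast$ small enough (in terms of $\tau,\lambda,\beta,\alpha_{high}$ and the $\tau_0,\epsilon_0,\alpha_\Delta$ of Theorem \ref{ProjThm}) to absorb the $\delta^{O(1)}$ and $(N_0/M)^{O(\epsilon)}$ losses accumulated above and to ensure that the exponents $\kappa$, $\alpha_{2D}$, $\alpha_0$ and $\tau_0$ of the projection theorem are simultaneously compatible with $\lambda$, $\beta$, and the window condition $L^\tau<N/M$.
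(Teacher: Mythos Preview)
Your preparations---regularize via Lemma \ref{LargeFourierRegularity}, extract structure via Lemma \ref{FourierBSG}, step down via $\widehat{\mu}_n(a)=|S|^{-1}\sum_s\widehat{\mu}_{n-1}(sa)$---are all present in the paper's proof, but the decisive step, the way Theorem \ref{ProjThm} enters, is organized quite differently, and your version has a gap.

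The paper does not apply Theorem \ref{ProjThm} to a subset of $S\times A_1$ under the bilinear map $(s,a)\mapsto sa$. It applies the theorem to the set $\widetilde E=(N_1^{-1}E)\times(N_1^{-1}E)$, where $E\subset\mathcal F(\mu_n,\delta^2/4)$ is the FourierBSG output, under the \emph{linear} projections $(\xi_1,\xi_2)\mapsto s_2\xi_1-s_1\xi_2$; the direction measure $\eta$ sits on $\{\overline{(-s_1,s_2)}:s_2\in B\}$ for a fixed $s_1$ and a large subset $B\subset S$. The argument is by contradiction: assuming the conclusion fails, a Cauchy--Schwarz and pigeonhole step produces many $s_2$ with $m\bigl(B_{M'}(s_1E)\cap B_{M'}(s_2E)\bigr)$ large, and then Ruzsa's inequality together with the small-doubling bound $\mathcal N(E-E;M)\ll\rho^6\delta'^{-8}|E_0'|$ from Lemma \ref{FourierBSG} yields an \emph{upper} bound on $|B_{M'}(s_1E)-B_{M'}(s_2E)|$. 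The projection theorem provides a contradictory \emph{lower} bound on $\mathcal N(s_2E-s_1E;M')$. This is exactly why the small doubling matters and why the paper first runs a dichotomy on $\rho=|E_1|/|E_0'|$: if $\rho$ is already large one wins immediately by stepping down, and if $\rho$ is small it is used as the parameter $R$ in Lemma \ref{FourierBSG}, making the doubling bound strong enough for the contradiction.

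Your route---linearize $\phi(s,a)=sa$ on $\sqrt r$-cells and invoke Theorem \ref{ProjThm} with $\eta$ the push-forward of uniform measure on $S/L$---does not close as written. The gradient of $\phi$ at $(s_0,a_0)$ is $(a_0,s_0)$, so the direction of the linearized projection depends on both coordinates, not on $s_0$ alone; your $\eta$ is therefore not the measure governing the projections that actually arise. Even with a corrected direction measure, the theorem only controls $\pi_{\overline\theta}(E'')$ for individual $\overline\theta$, and you have not explained why the $\phi$-images of the different cells (each carrying its own $\overline\theta$) do not collapse onto one another in the target; without that, no lower bound on $\mathcal N(\phi(E);LM)$ follows. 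Finally, note that your trivial bound $R\lesssim(N_0/M)^{1-\alpha}$ makes the small-doubling output of Lemma \ref{FourierBSG} useless (for small $\alpha$ it exceeds $N_0/M$), yet nothing else in your argument uses that output---a sign that the role of Lemma \ref{FourierBSG} has been misidentified. The missing architecture is: work on $E\times E$ with linear projections indexed by pairs from $S$, and play an upper bound (Ruzsa plus small doubling, fed by the negated conclusion) against a lower bound (projection theorem) on $s_2E-s_1E$.
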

\begin{proof}
	Let $\alpha_{\Delta}$ be as in Theorem \ref{ProjThm} for $\alpha_0=\min(\alpha_{ini},1-\alpha_{high})/2$, and for 	$\kappa=\frac{\lambda}{10}$. 
	
	Let
	\begin{equation}
	E_{0}\subset\mathcal{F}\left(\mu_{n},\delta\right)\cap [-N,N]
	\end{equation}
	be an $M$-separated set of maximal cardinality. Let $\epsilon<\frac{\alpha_{\Delta}}{640\cdot 20}$ be a constant to be
	determined when we explain how to apply the projection theorem (Theorem 17) later
	in this proof. Note that the proof may end without actually applying the projection theorem. Apply Lemma \ref{LargeFourierRegularity} with respect to $\mu_n,\delta$ (in the roles of $\mu,t$) to obtain
	\begin{equation}
	E'_0\subset \mathcal{F}\left(\mu_n,\delta^2/4\right)\cap[-N_1,N_1],
	\end{equation}
	an $M$-separated set which is $\left(C\delta^{-2},\alpha-10\epsilon\right)$-regular at scale $M$ ($C$ depends on $\epsilon$). $N_1$ is as obtained in the conclusion of Lemma \ref{LargeFourierRegularity}.
	Let
	\begin{equation}
	E_{1}\subset\mathcal{F}\left(\mu_{n},\delta^{4}/32\right)\cap [-2N_1,2N_1]
	\end{equation}
	be an $M$-separated set of maximal cardinality. And let
	\begin{equation}
	\rho=\frac{|E_{1}|}{|E'_{0}|}.
	\end{equation}

	We first deal with the case that 
	\begin{equation}\label{RhoBound}
	\rho \geq 1024C\left(\frac{N_1}{M}\right)^{\alpha_{\Delta}/640}\delta^{-6}.
	\end{equation}
	We will use the relation
	\begin{equation}\label{General fact}
	\widehat{\mu}_{n}(\xi)=\frac{1}{|S|}\sum_{s\in S}\widehat{\mu}_{n-1}(s\xi).
	\end{equation}	
	By passing to a subset $E_{1}'\subset E_{1}$ of size $|E_{1}'|> \frac{|E_{1}|}{4}$ we may assume that for some fixed $\theta\in [0,2\pi)$, for all $\xi\in E_{1}'$, $\mathrm{Re}(e^{i\theta}\cdot\widehat{\mu}(\xi))>\frac{\delta^4}{128}$. Therefore,
	\begin{equation}
		\left\vert\frac{1}{|E'_1|}\sum_{\xi\in E'_1}\widehat{\mu}_n (\xi)\right\vert\geq \frac{\delta^4}{128}.
	\end{equation}
	Then by the relation \eqref{General fact} we have the inequality
	\begin{equation}
	\frac{1}{|S||E_{1}'|}\sum_{s\in S}\sum_{\xi\in E_{1}'}|\widehat\mu_{n-1}(s\xi)|\geq \frac{\delta^4}{128}.
	\end{equation}
	In particular there exists $s_{0}\in S$ such that
	\begin{equation}
	\frac{1}{|E_{1}'|}\sum_{\xi\in E'_{1}}|\widehat{\mu}_{n-1}(s_{0}\xi)|\geq\frac{\delta^{4}}{128}.
	\end{equation}
	Let
	\begin{equation}
	E_{2}\subset \mathcal{F}\left(\mu_{n-1},\delta^{4}/256\right)\cap [-2LN_1,2LN_1]
	\end{equation}
	be an $(LM)$-separated set of maximal cardinality.
	By Lemma \ref{MarkovLemma} we have that 
	\begin{equation}\label{BootstrapRhoPartEnd}
	\begin{split}
	\left|E_2\right| & \geq \left|\left\{\xi\in E'_{1}:|\widehat{\mu}_{n-1}(s_{0}\xi)| \geq  \frac{\delta^{4}}{256}\right\}\right|	\\
	& \geq \frac{\delta^{4}}{256}|E'_{1}| \\
	& \geq \rho\frac{\delta^{4}}{1024}|E'_0|.
	\end{split}
	\end{equation}
	If the inequality \eqref{RhoBound} holds then we are done (as long as we choose $\alpha_{inc} \leq \frac{\alpha_\Delta}{1280}$, and for $N_0=2N_1$) as 
	\[
	\left|E_2\right| \leq \mathcal{N}\left(\mathcal{F}(\mu_{n-1},\delta^{4}/256)\cap[-2LN_1,2LN_1];LM\right)	
	\] and $\epsilon<\frac{\alpha_{\Delta}}{640\cdot 20}$.
	
	We now turn to the harder case where \eqref{RhoBound} fails. Define $\delta'=\frac{\delta^2}{4}$.
	By Lemma \ref{FourierBSG} there exists a set $E\subset E'_{0}$ such that
	\begin{enumerate}[label=\textit{(E.\arabic*)}]
		\item $\left|\frac{1}{|E|}\sum_{e\in E}\widehat{\mu}_n (e)\right|\geq \frac{\delta'}{2}$ \label{E1}.
		\item $|E|>c'|E'_{0}|\delta'^{2}$. \label{E2}
		\item $\mathcal{N}(E-E;M)<|E'_{0}|c\rho^{6}\delta'^{-8}$. \label{E3}
	\end{enumerate}	
	Where $c,c'$ are absolute constants.
	Set $\rho'= 1024C\left(\frac{N}{M}\right)^{\alpha_{\Delta}/640}\delta^{-6}$ (this is the value in the bound in \eqref{RhoBound}). 
	By \eqref{General fact} we have the inequality
	\begin{equation}\label{BootstrapCopyStart}
	\left|\frac{1}{|S|} \frac{1}{|E|}\sum_{s\in S}\sum_{e\in E}\widehat{\mu}_{n-1}(se)\right|\geq \frac{\delta'}{2}.
	\end{equation}
	By Lemma \ref{MarkovLemma} we have that
	\begin{equation}\label{FourierSemigroupPairIneq}
	\#\left\{ (s,e)\in S\times E: |\widehat{\mu}_{n-1}(se)|\geq\frac{\delta'}{4}\right\}\geq \frac{\delta'}{4}|S||E|.
	\end{equation}
	Assume that for $\alpha_{inc}$ small to be determined later (but certainly $ \leq \frac{\alpha_\Delta}{1280}$), the following holds:	
	\begin{equation}{\label{AssumeToCont}}
	\mathcal{N}\left(\mathcal{F}(\mu_{n-1},\frac{\delta'}{4})\cap[-LN_1,LN_1];LM\right)<\left(\frac{N_1}{M}\right)^{\alpha+\alpha_{inc}}.
	\end{equation}
	Let
	\begin{equation} 
	M'=LM,\qquad N'=LN_1.
	\end{equation}
	Let 
	\begin{equation}
	E_{3}\subset \mathcal{F}(\mu_{n-1},\frac{\delta'}{4})\cap[-N^{'},N^{'}]
	\end{equation}
	be an $M'$-separated set of maximal cardinality. By inequality \eqref{FourierSemigroupPairIneq} we have that
	\begin{equation}
	\frac{2}{M'}\sum_{s\in S} m\left(B_{M'}(sE)\cap B_{M'}(E_{3})\right) \geq \frac{\delta'}{4}|S||E|.
	\end{equation}
	Therefore,
	\begin{equation}
	\int_{B_{M'}(E_{3})}\sum_{s\in S}\mathbf{1}_{B_{M'}(sE)\cap B_{M'}(E_{3})}(x) dm(x)\geq \frac{\delta'}{8}M'|S||E|.
	\end{equation}
	By the Cauchy-Schwarz inequality we have that
	\begin{equation}\label{AfterCS}
	2M'|E_{3}|	\int_{B_{M'}(E_{3})}\left(\sum_{s\in S}\mathbf{1}_{B_{M'}(sE)\cap B_{M'}(E_{3})}(x)\right)^{2}dm(x) \geq \frac{\delta'^{2}}{64}M'^{2}|S|^{2}|E|^{2} .
	\end{equation}
	Writing inequality \eqref{AfterCS} in the following way
	\begin{equation}
	\frac{1}{|S|^{2}}\sum_{(s_{1},s_{2})\in S\times S}\frac{m\left(B_{M'}(s_{1}E)\cap B_{M'}(s_{2}E)\right)}{M'|E|}\geq\frac{\delta'^{2}|E|}{128|E_3|},
	\end{equation}
	we see (using Lemma \ref{MarkovLemma}) that
	\begin{equation}
	\#\left\{(s_{1},s_{2})\in S\times S:m(B_{M'}(s_{1}E)\cap B_{M'}(s_{2}E))\geq\frac{M'\delta'^{2}|E|^{2}}{256|E_3|}\right\}\geq\frac{\delta'^{2}|E|}{256|E_3|}|S|^{2}.
	\end{equation}
	For a specific $s_1\in S$ we define the set $B$ as follows:
	\begin{equation}\label{defSetB}
	B=\left\{s_{2}\in S:m\left(B_{M'}(s_{1}E)\cap B_{M'}(s_{2}E)\right)\geq\frac{M'\delta'^{2}|E|^{2}}{256|E_3|}\right\}.
	\end{equation}
	By the pigeonhole principle there exists $s_1\in S$ so that
	\begin{equation}
	|B|\geq \frac{\delta'^{2}|E|}{512|E_3|}|S|.
	\end{equation}
	By assumption $S$ is $(\widetilde{C},\lambda)$-regular, and $|E_3|< \left(\frac{N_1}{M}\right)^{\alpha+\alpha_{inc}}$. By \ref{E2} and the $(C\delta^{-2}, \alpha-10\epsilon)$ regularity of $E_0'$ we have
	\begin{equation}\label{size of E}
	|E|>C^{-1}\delta^{4}\left(\frac{N_{1}}{M}\right)^{\alpha-10\epsilon}
	\end{equation}	
	(recall that $\delta'=\delta^2/4$), Hence, we may conclude that
    \[
	\frac{|E|}{|E_3|}\geq\frac{c''\delta^6(N_1/M)^{\alpha-10\epsilon}}{(N_1/M)^{\alpha+\alpha_{inc}}},
    \]
	 from which it follows that $B$ is $(\widetilde{C_1},\lambda)$-regular, where 
	\[
		\widetilde{C_1}=C''L^{10C_*}(N_1/M)^{\alpha_{inc}+10\epsilon}\widetilde{C}
	\]
	for some $c'',C''$ independent of $N,M,L$.
	
	Using Rusza's triangle inequality (Lemma \ref{RuszaTriangleIneq}) we have the following, for fixed $s_1$ and for all $s_2\in B$:
	\begin{multline*}
	\begin{aligned}
	\left|B_{M'}(s_{1}E)-B_{M'}(s_{2}E) \right|\leq&\frac{\left|B_{M'}(s_{1}E)-B_{M'}(s_{1}E)\cap B_{M'}(s_{2}E)\right|\left|B_{M'}(s_{2}E)-B_{M'}(s_{1}E)\cap B_{M'}(s_{2}E) \right|}{\left|B_{M'}(s_{1}E)\cap B_{M'}(s_{2}E)\right|}  \\
	\leq&\frac{\left|B_{M'}(s_{1}E)-B_{M'}(s_{1}E)\right|\left|B_{M'}(s_{2}E)-B_{M'}(s_{2}E)\right|}{\left|B_{M'}(s_{1}E)\cap B_{M'}(s_{2}E)\right|} \\
	\leq& \frac{25\mathcal{N}(E-E;M)^{2}M'^{2}}{|B_{M'}(s_{1}E)\cap B_{M'}(s_{2}E)|}.
	\end{aligned}
	\end{multline*}	
	This can be summarized by the inequality
	\begin{equation}
	\left|B_{M'}(s_{1}E)-B_{M'}(s_{2}E) \right|\leq  \frac{25\mathcal{N}(E-E;M)^{2}M'^{2}}{|B_{M'}(s_{1}E)\cap B_{M'}(s_{2}E)|}
	\end{equation}
	By \ref{E2}, \ref{E3} and by \eqref{defSetB},\eqref{AssumeToCont}, for $s_2\in B$ ($s_1$ fixed before), we have
	\begin{equation}\label{DiffLowerBound}
	\begin{split}
	\left|B_{M'}(s_{1}E)-B_{M'}(s_{2}E) \right| & \leq c\rho^{12}\delta'^{-22}M'|E_3|\\
	& < c_1\rho^{12}\delta^{-44}M'\left(\frac{N_1}{M}\right)^{\alpha+\alpha_{inc}}\\
	& < c_2 \delta^{-116}M' \left(\frac{N_1}{M}\right)^{\alpha+\alpha_{inc+3\alpha_\Delta/16}}.
	\end{split}
	\end{equation}
	


		Define the set $\widetilde{E}=N_{1} ^{-1}E\times N_{1}^{-1}E$. By \eqref{size of E},	
	\begin{equation}
	|\widetilde{E}|>C^{-2}\delta^{8}\left(\frac{N_{1}}{M}\right)^{2\alpha-20\epsilon}>\left(\frac{N_{1}}{M}\right)^{2\alpha-21\epsilon}
	\end{equation}
	by \eqref{BootstrapNMcond} and~\eqref{N1_MtoN_Mrelation}, the second inequality holding if $C^{*} < 2^{-6} \tau \epsilon$ and $L_1$ is large enough.

	The set $\widetilde{E}$ is $\frac{M}{N_{1}}$-separated and $(C^{2}\delta^{-8},2\alpha-20\epsilon)$-regular at scale $M/N_{1}$. 
	We apply the projection theorem, Theorem~\ref{ProjThm}, to the set $\widetilde{E} \subset [-1,1]^2$ with respect to the measure $\eta$ on the set of directions in $\mathbb{P}^{1}$ corresponding to uniform choice of direction from the projection of the set $\{-s_1\}\times B$ to $\mathbb{P}^{1}$. This measure $\eta$ will satisfy \eqref{regularity for Bourgain on directions} for any $\kappa<\lambda$ as long as the $\tau_0$ from Theorem~\ref{ProjThm} satisfies that $\left(\frac{N_1}{M}\right)^{\tau_0(\lambda-\kappa)} > \widetilde{C_1}$, which holds for suitable choice of $C^{*},\epsilon,\alpha_{inc}$ once $L_1$ is large enough.
	Similarly, $\widetilde{E}$ will satisfy \eqref{regularity for Bourgain on set} once $\kappa < \alpha_{ini}$ if $C^{*},\epsilon$ are small enough and $L_1$ large enough.

	Theorem \ref{ProjThm} gives us a set 
	$\Theta\subset \mathbb{P}^{1}$  with $\eta (\Theta)>1-\left(\frac{M}{N_1}\right)^{\epsilon_0}$ 
	so that for $\overline{(-s_1,s_2)}\in\Theta$:
	\begin{equation}\label{ContradictionIneqUB}
	\begin{split}
	\mathcal{N} & \left(\frac{s_{2}E/N_{1}-s_{1}E/N_{1}}{\sqrt{s_{1}^{2}+s_{2}^{2}}};\frac{M}{N_{1}}\right)\geq \left(\frac{N_1}{M}\right)^{\alpha+\alpha_{\Delta}-11\epsilon} \\
	\end{split}
	\end{equation}
	Since $\eta(\Theta)$ is positive, there is at least one $s_2\in B\subset S$, for which the inequality \ref{ContradictionIneqUB} holds. Let $s_2$ be any such number.
	
	Using the fact that $\sqrt{s_1^2+s_2^2}\geq L$ and by $M'=LM$, we have (if $\epsilon$ is $<\alpha_\Delta/100$) the inequality
	\begin{equation}
	\mathcal{N}\left(s_{2}E-s_{1}E;M'\right)\geq \left(\frac{N_1}{M}\right)^{\alpha+\alpha_{\Delta}/2}.
	\end{equation}
	Therefore,
	\begin{equation}\label{DiffUpperBound}
	\left|B_{M'}(s_{1}E)-B_{M'}(s_{2}E) \right|\geq 25\mathcal{N}\left(s_{2}E-s_{1}E;M'\right)M'\geq M'\left(\frac{N_1}{M}\right)^{\alpha+\alpha_{\Delta}/2}.
	\end{equation}
	
	Recalling that $\alpha_{inc}$ was chosen to be $\leq\alpha_{\Delta}/1280$, we get a contradiction between \eqref{DiffLowerBound} and \eqref{DiffUpperBound} if $C^{*}$ is small enough and $L_1$ large enough. This completes the proof (with $N_0=N_1$).
	

\end{proof}

\end{section}

\begin{section}{Dimensions of Projections}
	This section contains background material for a final bootstrapping lemma, which is stated and proved the end.
	The following part is adapted from \cite{MR2726604}. Closely related to the notion of $(C,\alpha)$-regular measure introduced in Definition \ref{BFLM_Def5.1} is the notion of $\alpha$-energy of a measure $\rho$, denoted by $\mathcal{E}_\alpha (\rho)$, which we define for a compactly supported measure $\rho$ on $\mathbb{R}^d$ and $\alpha<d$.
	\begin{defn}
		The $\alpha$-\emph{energy} of a compactly supported measure $\rho$ on $\mathbb{R}^d$ and $\alpha<d$, denoted by $\mathcal{E}_\alpha (\rho)$, is defined by
		\begin{equation}
		\mathcal{E}_\alpha (\rho)=\int_{\mathbb{R}^d}\int_{\mathbb{R}^d}\frac{d\rho(x)d\rho(y)}{|x-y|^\alpha}.
		\end{equation}
	\end{defn}
	
	If $\rho$ is $(C,\alpha+\epsilon)$-regular on a set $B$ at all scales, then
	\begin{equation}\label{EnergyAsym}
	\mathcal{E}_\alpha (\rho)= \alpha\int\int \frac{\rho(\mathsf{B}_{x,r})}{r^{\alpha+1}}d\rho(x)dr\leq C(\mathrm{diam}\ B)^{-\alpha}\alpha\epsilon^{-1}.
	\end{equation}
	
	The energy $\mathcal{E}_\alpha (\rho)$ can also be expressed in terms of the Fourier transform of $\rho$, up to an implicit constant that tends to $\infty$ as $\alpha\rightarrow 1$ (see \cite{mattila1999geometry}, Lemma 12.12):
	\begin{equation}\label{EnergyAsym2}
	\mathcal{E}_{\alpha}(\rho)\asymp\int_{\mathbb{R}^d}|\widehat{\rho}(\xi)|^2(1+|\xi|)^{\alpha-d}d\xi.
	\end{equation}	
	
	If $\mathcal{E}_\alpha (\rho)<\infty$, then any set of positive $\rho$ measure has Hausdorff dimension at least $\alpha$ (for this and further information about $\alpha$-energy, see \cite{mattila1999geometry}).
	
	A simple way to adapt this notion to our "coarse" setup, where we do not care about the details of how $\rho$ behaves at scales smaller than $r$, is to smooth it by convolving with an appropriate kernel. Let $\Phi$ be a fixed radially symmetric nonnegative smooth function on $\mathbb{R}^d$ with $\left\Vert\Phi\right\Vert_1=1$ supported on $\mathsf{B}_{\mathbf{0},1}$ and for $r>0$ set 
	\begin{equation}\label{normalize_phi}
	\Phi_r (x)=r^{-d}\Phi(r^{-1}x).
	\end{equation}
	Then instead of using the possibly atomic measure $\rho$, we can consider its smoothed version $\rho'=\rho*\Phi_r$. In particular, if $\rho$ is $(C,\alpha+\epsilon)$-regular at scale $r$ on a subset $B\subset\mathbb{R}^d$, then
	\begin{equation}
	\mathcal{E}_\alpha (\rho *\Phi_r)\ll C(\mathrm{diam}\ B)^{-\alpha}\alpha\epsilon^{-1}
	\end{equation}
	with the implicit parameter depending only on $d$ and the choice of $\Phi$.

	See \cite{MR2726604}, subsection 6.C. for more details.
	Let $\Psi:\mathbb{R}\rightarrow\mathbb{R}^+$ be the smooth compactly supported function
	\begin{equation}
	\Psi(x_1)=\int dx_2 ...\int dx_d\Phi(x_1,x_2,...,x_d), 
	\end{equation}
	and define $\Psi_r$ analogously to (\ref{normalize_phi})
	
	\begin{lem}[\cite{MR2726604}, Lemma 6.10]\label{BFLM610}
		Let $\rho$ be a probability measure on $\mathbb{R}$, and let $\phi$ be the Radon-Nikodym derivative $\phi=\frac{d(\rho*\Psi_r)}{dx}$. Then for every $0<r<r_1<1$
		\begin{equation}
		\mathcal{N}(\mathrm{supp}\ \rho;r_1)\geq (4r_1\left\Vert\phi\right\Vert_2^2)^{-1}.
		\end{equation}
		Moreover, for any subset $X\subset\mathrm{supp}\ \rho$,
		\begin{equation}
		\mathcal{N}(X;r_1)\geq\frac{\rho(X)^2}{4r_1\left\Vert\phi\right\Vert^2_2}.
		\end{equation}
	\end{lem}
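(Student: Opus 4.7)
The plan is to prove both inequalities simultaneously, since the first follows from the second by taking $X = \mathrm{supp}\,\rho$ (where $\rho(X)=1$). The proof is a one-line Cauchy--Schwarz argument, but it relies on two preliminary observations about the convolution kernel.

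First, I would pin down the relevant properties of $\Psi_r$. Since $\Phi$ is supported on $\mathsf{B}_{\mathbf 0,1}\subset\mathbb{R}^d$ with $\|\Phi\|_1=1$, the marginal $\Psi$ is supported in $[-1,1]$ and satisfies $\int_{\mathbb{R}}\Psi\,dx_1 = \int_{\mathbb{R}^d}\Phi = 1$. Rescaling as in \eqref{normalize_phi}, $\Psi_r$ is supported in $[-r,r]$ and integrates to $1$.

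Next, let $N=\mathcal{N}(X;r_1)$ and pick a covering of $X$ by balls $B_1,\dots,B_N$ of radius $r_1$. Let $A=\bigcup_{i=1}^N \tilde{B}_i$, where $\tilde{B}_i$ is the concentric ball of radius $r_1+r$. Since $r<r_1$, each $\tilde{B}_i$ is an interval of length at most $4r_1$, so
\begin{equation}
|A|\leq 4Nr_1.
\end{equation}
The key point is that $A$ captures essentially all of the smoothed mass coming from $X$: by Fubini,
\begin{equation}
\int_A \phi(y)\,dy=\int d\rho(x)\int_A \Psi_r(y-x)\,dy,
\end{equation}
and for any $x\in X$ there is some $B_i\ni x$, whence $\mathrm{supp}\,\Psi_r(\cdot-x)\subset \tilde{B}_i\subset A$, so the inner integral equals $\int\Psi_r=1$. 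Therefore
\begin{equation}
\int_A \phi(y)\,dy\geq \rho(X).
\end{equation}

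Finally, by Cauchy--Schwarz,
\begin{equation}
\rho(X)\leq \int_A \phi\,dy\leq \|\mathbf{1}_A\|_2\,\|\phi\|_2=|A|^{1/2}\|\phi\|_2\leq (4Nr_1)^{1/2}\|\phi\|_2,
\end{equation}
which squares to $\mathcal{N}(X;r_1)\geq \rho(X)^2/(4r_1\|\phi\|_2^2)$, as required. Taking $X=\mathrm{supp}\,\rho$ yields the first displayed inequality.

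There is no real obstacle here; the only point requiring a little care is verifying that $\int\Psi_r=1$ and that $\Psi_r$ has support of radius $r$ (not some larger constant times $r$) so that the $r$-enlargement of each covering ball fits inside an interval of length $\leq 4r_1$. Both facts follow immediately from the definition of $\Psi$ and the normalization \eqref{normalize_phi}.
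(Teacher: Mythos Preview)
Your proof is correct. The paper does not include its own proof of this lemma; it is simply quoted from \cite{MR2726604} without argument, so there is nothing to compare against. Your Cauchy--Schwarz argument over an $r$-thickening of a minimal cover is the standard proof of this fact.
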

	For the next proposition we need a further definition.
	\begin{defn}
		Let $\rho$ be a probability measure supported on the unit ball $\mathsf{B}_{0,1}$ of $\mathbb{R}^d$ and let $\rho_{\theta}$ be the orthogonal projection of the measure $\rho$ in the direction $\theta\in \mathbb{P}^{d-1}$. Then $\widehat{\rho_{\theta}}(t)$ is defined by
		\begin{equation}
			\widehat{\rho_{\theta}}(t)=\int_{-\infty}^{\infty}e^{-2\pi itz}d\rho_{\theta}(z).
		\end{equation}
	\end{defn}
	\begin{prop}[\cite{MR2726604}, Prop. 6.11]\label{BFLM611}
		Let $\rho$ be a probability measure supported on the unit ball $\mathsf{B}_{0,1}$ of $\mathbb{R}^d$ so that $\mathcal{E}_{\alpha}(\rho)<\infty$ for some $0<\alpha<d$, $0<r<1$, and let $\eta$ be a measure on $S^{d-1}$ such that for some $c_\eta,\beta>0$
		\begin{equation}\label{PSDirectionReg}
		\eta(\mathsf{B}_{\theta,\epsilon})\leq c_\eta\epsilon^{\beta}\qquad\mathnormal{for\ every\ }\epsilon>r\ and\ \theta\in S^{d-1}.
		\end{equation}
		Then for any $\beta'<\beta$
		\begin{equation}\label{PSDirectional}
		\begin{split}
		\int_{\theta}\int_t |\widehat{\rho_\theta} & (t)|^2\left\vert\widehat{\Psi_r}(t)\right\vert^2(1+|t|)^{\beta'+\alpha-d}dtd\eta (\theta) \\
		& \leq c_\eta C_d \int_{\mathbb{R}^d}|\widehat{\rho}(x)|^2 \left\vert\widehat{\Phi_r} (x)\right\vert^2 (1+|x|)^{\alpha-d}dx+c_\eta C(\alpha,\beta,\beta',d).
		\end{split}
		\end{equation}
	\end{prop}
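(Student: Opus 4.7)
The plan is to follow the Mattila--Kaufman Fourier analytic template for projection inequalities: apply the projection slice formula to recast the left hand side as a radial integral of $|\widehat{\rho}|^{2}|\widehat{\Phi_{r}}|^{2}$ on $\mathbb{R}^{d}$ against the measure $dt\,d\eta(\theta)$, change to polar coordinates, and then absorb $\eta$ into the surface measure $\sigma$ using the Frostman hypothesis, at a cost controlled by $\beta-\beta'$.

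First I would use $\widehat{\rho_{\theta}}(t)=\widehat{\rho}(t\theta)$ together with the radial symmetry of $\Phi$, which yields $\widehat{\Psi_{r}}(t)=\widehat{\Phi_{r}}(t\theta)$ for every unit $\theta$. Substituting these, the left hand side becomes
\begin{equation*}
I=\int_{S^{d-1}}\int_{\mathbb{R}}|\widehat{\rho}(t\theta)|^{2}\,|\widehat{\Phi_{r}}(t\theta)|^{2}\,(1+|t|)^{\beta'+\alpha-d}\,dt\,d\eta(\theta).
\end{equation*}
I would split the $t$-integration into $|t|\leq 1$ and $|t|>1$. On $|t|\leq 1$ the integrand is bounded by a universal constant (using $|\widehat{\rho}|,|\widehat{\Phi_{r}}|\leq\|\Phi\|_{1}$) and $\eta(S^{d-1})\leq 2^{\beta}c_{\eta}$ by applying Frostman at scale $2$, so this range contributes at most the additive error term $c_{\eta}C(\alpha,\beta,\beta',d)$.

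For $|t|>1$ I would use the symmetry $|\widehat{\rho}(-x)|^{2}=|\widehat{\rho}(x)|^{2}$ to restrict to $t>0$ and switch to polar coordinates $x=t\theta$ on $\mathbb{R}^{d}$. The RHS in these polar variables reads
\begin{equation*}
c_{\eta}C_{d}\int_{1}^{\infty}\int_{S^{d-1}}|\widehat{\rho}(t\theta)|^{2}|\widehat{\Phi_{r}}(t\theta)|^{2}\,t^{\alpha-1}\,d\sigma(\theta)\,dt,
\end{equation*}
after absorbing the Jacobian $t^{d-1}$ into the weight $(1+|x|)^{\alpha-d}$, so the remaining task is to prove
\begin{equation*}
\int_{1}^{\infty}\int_{S^{d-1}}G(t\theta)\,t^{\beta'+\alpha-d}\,dt\,d\eta(\theta)\lesssim c_{\eta}\int_{1}^{\infty}\int_{S^{d-1}}G(t\theta)\,t^{\alpha-1}\,dt\,d\sigma(\theta),
\end{equation*}
where $G=|\widehat{\rho}|^{2}|\widehat{\Phi_{r}}|^{2}$.

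The heart of the proof, and the main obstacle, is this last comparison between $d\eta$ and $d\sigma$. Such a bound cannot be expected to hold pointwise in $t$, so I would proceed dyadically: at each frequency scale $t\sim 2^{k}$ cover $S^{d-1}$ by spherical caps of radius $\sim 2^{-k}$. The Frostman hypothesis assigns $\eta$-mass at most $c_{\eta}2^{-k\beta}$ to each cap (valid while $2^{-k}>r$), to be compared with $\sigma(\text{cap})\asymp 2^{-k(d-1)}$; thus passing from $d\sigma$ to $d\eta$ costs a factor $c_{\eta}2^{k(d-1-\beta)}$ at that angular scale, while the weight mismatch $t^{\beta'+\alpha-d}$ versus $t^{\alpha-1}$ contributes a decay $2^{-k(d-1-\beta')}$ at the same scale. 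The product is $c_{\eta}2^{-k(\beta-\beta')}$, and summing over $k\geq 0$ gives a finite constant exactly because $\beta'<\beta$, with the resulting sum behaving like $(\beta-\beta')^{-1}$; this is where $C(\alpha,\beta,\beta',d)$ arises. The dyadic scales on which Frostman is unavailable ($2^{-k}\leq r$) are absorbed into the additive constant, thanks to the rapid decay of $|\widehat{\Phi_{r}}|^{2}$ at frequencies $\gtrsim 1/r$.
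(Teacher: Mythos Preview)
The paper does not give its own proof of this proposition; it is quoted from \cite{MR2726604}. Your outline is the standard Kaufman--Mattila scheme, and the reduction via the projection--slice identities $\widehat{\rho_\theta}(t)=\widehat{\rho}(t\theta)$ and $\widehat{\Psi_r}(t)=\widehat{\Phi_r}(t\theta)$, together with the passage to polar coordinates, is correct.

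The substantive gap is the dyadic angular comparison. A Frostman bound $\eta(\mathsf{B}_{\theta,2^{-k}})\leq c_\eta 2^{-k\beta}$ is not a pointwise density inequality $d\eta\lesssim c_\eta 2^{k(d-1-\beta)}\,d\sigma$, so one cannot ``pass from $d\sigma$ to $d\eta$ at cost $c_\eta 2^{k(d-1-\beta)}$'' for an arbitrary nonnegative integrand; that step is only legitimate once the integrand is essentially constant on caps of radius $2^{-k}$. The missing ingredient is the hypothesis that $\rho$ is supported in the unit ball: this makes $\widehat{\rho}$, and hence $G=|\widehat{\rho}|^2|\widehat{\Phi_r}|^2$, Lipschitz with constant $O(1)$ on $\mathbb{R}^d$, so that $G(t\theta)$ varies by at most $O(1)$ over a cap of angular radius $2^{-k}$ when $t\sim 2^k$. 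With this in hand one may replace $\eta$ by its mollification at scale $2^{-k}$, which does have density $\lesssim c_\eta 2^{k(d-1-\beta)}$ relative to $\sigma$, and then control the resulting additive error. You never invoke the support hypothesis on $\rho$, which is precisely what supplies the angular regularity the cap argument needs. A cleaner alternative, closer to how such estimates are usually proved, is to work on the spatial side: interpret the inner $t$-integral as a one-dimensional energy of $(\rho*\Phi_r)_\theta$, unfold to $\iint\bigl(\int_{S^{d-1}}|\langle y-z,\theta\rangle|^{-s}\,d\eta(\theta)\bigr)\,d\rho'(y)\,d\rho'(z)$ with $\rho'=\rho*\Phi_r$, and bound the inner $\eta$-integral directly from the Frostman condition by the layer-cake formula.
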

	We shall use Proposition \ref{BFLM611} with $d=2$. Almost quoting from \cite{MR2726604}, note that if $\alpha+\beta'>d$ and $\rho$ is $(C,\alpha')$-regular at scale $r$ for $\alpha'>\alpha$, then by \eqref{EnergyAsym} the right-hand side of \eqref{PSDirectional} is bounded from above by a constant (depending on $\alpha,\alpha',\beta,\beta',C,...$) while the left hand side is at least
	\begin{equation}
	\int_{\theta}\left\Vert\frac{d(\rho_{\theta} * \Psi_r)}{dx}\right\Vert^2_2 d\eta(\theta).
	\end{equation}
	In view of Lemma \ref{BFLM610}, this in particular implies that for $\eta$-many choices of $\theta$, the covering number of supp($\rho_\theta$) by $r$-intervals is large.

	The next lemma will be used as a final step after the application of a number of iterations of Lemma \ref{BootstrapLemma}.
	\begin{lem}[High dimension to positive density]\label{FinalBootstrap}
		For any $\lambda>0$ there exist $\epsilon_0>0$ and $L_1, C^*>0$ such that if $L>L_1$, $\delta>L^{-C^*}$, $S \subset [L,2L]$ is $(\widetilde{C},\lambda)$-regular, $N\leq LM$ and
		\begin{equation}\label{AlmostHDIneq}
		\mathcal{N}\left(\mathcal{F}(\mu_n,\delta)\cap[-N,N];M\right)>\left(\frac{N}{M}\right)^{1-\epsilon_0},
		\end{equation}
		then there exists $N_1$ such that
		\begin{equation}
		\mathcal{N}\left(\mathcal{F}(\mu_{n-1},\delta^4/128 )\cap[-N_1,N_1];M\right)>\frac{c\delta^{10}}{\widetilde{C}}\frac{N_1}{M} ,
		\end{equation}
		where $c$ is a constant and $N_1$ is such that
		\begin{equation}
		\log\frac{N_1}{M}>\frac{1}{2}\log\frac{N}{M}.
		\end{equation}
	\end{lem}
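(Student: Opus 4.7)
The approach mirrors Lemma~\ref{BootstrapLemma}, but swaps Bourgain's projection theorem for the energy-based Proposition~\ref{BFLM611}. The point is that when the dimension of $\mathcal F(\mu_n,\delta)$ is within $\epsilon_0$ of being maximal, the two-dimensional lift of a regularized subset is nearly planar, so Proposition~\ref{BFLM611} outputs \emph{linear} covering for almost every projection rather than a mere incremental dimension gain. First apply Lemma~\ref{LargeFourierRegularity} to extract an $M$-separated $A\subset\mathcal F(\mu_n,\delta^2/4)\cap[-\tilde N,\tilde N]$ that is $(C_\epsilon\delta^{-2},1-\epsilon_0-10\epsilon)$-regular at scale $M$, with $\log(\tilde N/M)$ comparable to $\log(N/M)$. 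Use $\widehat\mu_n(\xi)=|S|^{-1}\sum_s\widehat\mu_{n-1}(s\xi)$ together with Lemma~\ref{MarkovLemma} to produce a set $H\subset S\times A$ of size $\geq \delta^2|S||A|/16$ on which $|\widehat\mu_{n-1}(sa)|>\delta^2/8>\delta^4/128$; writing $A^s:=\{a\in A:(s,a)\in H\}$, a positive proportion of $s\in S$ satisfy $|A^s|\gtrsim\delta^2|A|$.

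Next, pass to the rescaled 2D set $\widetilde A:=\tilde N^{-1}(A\times A)\subset[-1,1]^2$ with its uniform measure $\rho$. Product regularity gives that $\rho$ is $(C_\epsilon^2\delta^{-4},2-2(\epsilon_0+10\epsilon))$-regular at scale $r:=M/\tilde N$, so by~\eqref{EnergyAsym} one has $\mathcal E_\alpha(\rho*\Phi_r)\lesssim\delta^{-4}$ for $\alpha$ just below $2$. Define the direction measure $\eta$ on $\mathbb P^1$ as the pushforward of $\nu_S\otimes\nu_S$ under $(s_1,s_2)\mapsto[(-s_2,s_1)]$; a direct count using the $(\widetilde C,\lambda)$-regularity of $S$ (control the number of slopes $s_2/s_1$ in an interval of length $\rho'$ by counting $S$ in an interval of length $\leq 2L\rho'$) gives $\eta(\mathsf B_{\bar\theta,\rho'})\leq c\widetilde C(\rho')^\lambda$ for $\rho'\geq 1/L$, which is the hypothesis~\eqref{PSDirectionReg} with $c_\eta\asymp\widetilde C$ and $\beta=\lambda$. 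Apply Proposition~\ref{BFLM611} with $d=2$, $\alpha$ just below $2$, and $\beta'$ just below $\lambda$; since $\alpha+\beta'>d=2$ the weight $(1+|t|)^{\beta'+\alpha-d}\geq 1$, so the right-hand side of~\eqref{PSDirectional} is $\lesssim\widetilde C\delta^{-4}$. A Markov argument then yields a set $\Theta\subset\mathbb P^1$ with $\eta(\Theta)\geq 1/2$ on which $\|\rho_\theta*\Psi_r\|_2^2\lesssim\widetilde C\delta^{-4}$.

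Finally, for $\theta=[(-s_2,s_1)]\in\Theta$ apply the ``moreover'' clause of Lemma~\ref{BFLM610} to the subset $X_\theta:=\tilde N^{-1}(A^{s_1}\times A^{s_2})\subset\widetilde A$, which has $\rho$-mass $\gtrsim\delta^4$; this produces $\mathcal N(\pi_\theta(X_\theta);r)\gtrsim\delta^8\widetilde C^{-1}/r$, which after rescaling by $\tilde N\sqrt{s_1^2+s_2^2}$ becomes
\[
\mathcal N\bigl(s_1A^{s_1}-s_2A^{s_2};\,M\sqrt{s_1^2+s_2^2}\bigr)\gtrsim \delta^8\widetilde C^{-1}\cdot\tilde N/M.
\]
Choosing $N_1\sim L\tilde N$ (which automatically satisfies $\log(N_1/M)>\tfrac12\log(N/M)$, since $N\leq LM$ forces $\log(N/M)\leq \log L$) and identifying the difference set with large-Fourier frequencies of $\mu_{n-1}$ will yield the claimed bound $c\delta^{10}\widetilde C^{-1}\cdot N_1/M$ after accounting for the various $\delta$-losses.

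\textbf{Main obstacle.} The delicate step is the very last: the projection naturally produces a lower bound on the covering of a \emph{difference set} $s_1A^{s_1}-s_2A^{s_2}$ at the coarse scale $\sqrt{s_1^2+s_2^2}\,M\sim LM$, while the conclusion asks for a density of $\mathcal F(\mu_{n-1},\delta^4/128)$ itself at the finer scale $M$. Bridging the two requires either identifying the difference set with Fourier coefficients of the self-convolution $\mu_{n-1}*\overline{\mu_{n-1}}$ (via $|\widehat{\mu*\overline\mu}(n)|=|\widehat\mu(n)|^2$) or aggregating many good $(s_1,s_2)\in\Theta$ to extract a dense union $\bigcup_{s\in S'}sA^s\subset\mathcal F(\mu_{n-1},\delta^2/8)$; in either case one must carefully propagate the $\delta^{-4}$ loss from the energy bound and the $\widetilde C$ loss from the direction regularity so that the final constant is exactly $\delta^{10}/\widetilde C$.
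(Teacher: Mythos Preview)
Your overall architecture---regularize via Lemma~\ref{LargeFourierRegularity}, lift to a 2D set $A\times A$, feed a direction measure built from $S$ into Proposition~\ref{BFLM611}, and read off covering numbers via Lemma~\ref{BFLM610}---is exactly the one the paper uses. The gap is precisely the obstacle you flag at the end, and neither of your two proposed workarounds closes it.

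The issue is that your projection produces the set $s_1A^{s_1}-s_2A^{s_2}$, and all you know is that $s_iA^{s_i}\subset\mathcal F(\mu_{n-1},\delta^2/8)$. Differences of large Fourier coefficients of $\mu_{n-1}$ are \emph{not} large Fourier coefficients of $\mu_{n-1}$, nor of $\mu_{n-1}*\overline{\mu_{n-1}}$: the identity $\widehat{\mu*\overline\mu}(n)=|\widehat\mu(n)|^2$ still only speaks about $\widehat\mu$ at the single frequency $n$, not about $\widehat\mu(a)\widehat\mu(b)$ for $a-b=n$. Your aggregation idea, taking $\bigcup_s sA^s$, abandons the projection entirely; a direct count shows this union has at most $|S|\cdot|A|\ll L\cdot\tilde N/M$ points, which cannot give positive density at scale $M$ in a window of size $L\tilde N$.

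The missing idea is a Cauchy--Schwarz step \emph{before} the projection. With $\phi(x)=\sum_{s\in S}\sum_{\xi\in E}e_{s\xi}(x)$ one has
\[
\sum_{s_1,s_2,\xi_1,\xi_2}\widehat{\mu}_{n-1}(s_1\xi_1-s_2\xi_2)=\int_{\mathbb T}|\phi|^2\,d\mu_{n-1}\ \ge\ \Bigl|\int_{\mathbb T}\phi\,d\mu_{n-1}\Bigr|^2=\Bigl||S|\sum_{\xi\in E}\widehat{\mu}_n(\xi)\Bigr|^2,
\]
which immediately yields many quadruples $(s_1,s_2,\xi_1,\xi_2)$ with $s_1\xi_1-s_2\xi_2\in\mathcal F(\mu_{n-1},\delta'^2/8)=\mathcal F(\mu_{n-1},\delta^4/128)$. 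After fixing one $s_2$ optimally and selecting $S'\subset S$ so that each $s_1\in S'$ has a $\gtrsim\delta'^2$-dense fiber in $E\times E$, the projection of that fiber in direction $[(s_1,-s_2)]$ lands \emph{directly} inside $\mathcal F(\mu_{n-1},\delta^4/128)$, and Lemma~\ref{BFLM610} finishes. (Fixing $s_2$ also sidesteps the diagonal atom your two-parameter direction measure $\eta$ would otherwise carry at $[(-1,1)]$.) With this one change your sketch goes through and matches the paper.
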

	\begin{proof}
		(Parts of this Lemma are adapted from \cite[Lemma 6.12]{MR2726604}).
		Set $\epsilon_0=\lambda/60$. 
		Assume that for $N,M,n,\delta$ the inequality \ref{AlmostHDIneq} holds. Let
		\begin{equation}
		E_{0}\subset\mathcal{F}\left(\mu_{n},\delta\right)\cap [-N,N]
		\end{equation}
		be an $M$-separated set of maximal cardinality.

		Set $\delta'=\delta^2/4$. By Lemma \ref{LargeFourierRegularity} applied with $\epsilon = \lambda/6$, there exists $N_1\in(M,N)$ with $\log(N_1/M)>\frac{1}{2}\log(N/M)$ such that $\mathcal{F}(\mu_n,\delta')\cap[-N_1,N_1]$ contains a subset $E$ which is $(C\delta'^{-2},1-2\lambda/6)$-regular at scale $M$, where $C$ depends only on $\lambda$. We may assume that
		\begin{equation}
		\frac{1}{E}\left\vert\sum_{b\in E}\widehat{\mu}_n(b)\right\vert\geq \frac{\delta'}{2},
		\end{equation}
		since we may always choose a subset $E_1\subset E$ of cardinality $\geq$ $|E|/4$ on which the above inequality holds which 
		is $(C\delta'^{-2},1-2\lambda/6)$-regular (possibly for a slightly different $C$).
		
		 Set $\phi(x)=\sum_{s\in S}\sum_{\xi\in E}e_{s\xi}(x)$. Then by the Cauchy-Schwarz inequality we have
		 \begin{equation}
		 \begin{split}
			 \sum_{s_1,s_2\in S}\sum_{\xi_1,\xi_2\in E}\widehat{\mu}_{n-1}(s_1\xi_1-s_2\xi_2) &= \int_{\mathbb{T}}\sum_{s_1,s_2\in S}\sum_{\xi_1,\xi_2\in E} e_{s_1\xi_1-s_2\xi_2}(x)d\mu_{n-1}(x) \\
			 & = \int_{\mathbb{T}}|\phi(x)|^2d\mu_{n-1}(x) \\
			 & \geq \left\vert \int_{\mathbb{T}}\phi(x) d\mu_{n-1}(x)\right\vert^2\\
			 & =\left\vert\sum_{s\in S} \sum_{\xi\in E} \widehat{\mu}_{n-1}(s\xi)\right\vert^2\\
			 & = \left\vert\ |S| \sum_{\xi\in E} \widehat{\mu}_{n}(\xi)\right\vert^2.
		\end{split}
		 \end{equation} 
		We then obtain,
		\begin{equation}
		\frac{1}{|S|^2}\sum_{s_1,s_2\in S}\sum_{\xi_1,\xi_2\in E}\widehat{\mu}_{n-1}(s_1\xi_1-s_2\xi_2)\geq \left\vert\sum_{\xi\in E} \widehat{\mu}_{n}(\xi)\right\vert^2,
		\end{equation}
		and so
		\begin{equation}
		\begin{split}
		\frac{1}{|E|^2}\frac{1}{|S|^2}\sum_{s_1,s_2\in S}\sum_{\xi_1,\xi_2\in E}\widehat{\mu}_{n-1}(s_1\xi_1-s_2\xi_2)&\geq \left\vert\frac{1}{E}\sum_{\xi\in E} \widehat{\mu}_{n}(\xi)\right\vert^2\\
		&\geq \frac{\delta'^2}{4}.
		\end{split}
		\end{equation}
		
		Fix $s_2$ to be an element in $S$ such that the term corresponding to it in the above sum is the largest. Then
		\begin{equation}
		\frac{1}{|S|}\frac{1}{|E|^2}\sum_{s_1,s_2\in S}\sum_{\xi_1,\xi_2\in E}\widehat{\mu}_{n-1}(s_1\xi_1-s_2\xi_2)\geq \frac{\delta'^2}{4}.
		\end{equation}
		By Lemma \ref{MarkovLemma} we have that
		\begin{equation}\label{PDNumberIneq}
			\#\left\{(s_1,\xi_1,\xi_2)\in S\times E\times E: |\widehat{\mu}_{n-1}(s_1\xi_1-s_2\xi_2)|\geq \frac{\delta'^2}{8}\right\}\geq\frac{\delta'^2}{8}|S||E|^2.
		\end{equation}
		Let 
		\begin{equation}
			Q=\left\{(s_1,\xi_1,\xi_2)\in S\times E\times E: |\widehat{\mu}_{n-1}(s_1\xi_1-s_2\xi_2)|\geq \frac{\delta'^2}{8}\right\}.
		\end{equation}
		Next, we define a set $S'$ by
		\begin{equation}
			S'=\left\{s_1\in S: \left|Q\cap \left(\{s_1\}\times E \times E\right)\right|\geq\frac{\delta'^2|E|^2}{16} \right\}.
		\end{equation}
		By Lemma \ref{MarkovLemma} we have that $|S'|\geq \frac{\delta'^2}{16}|S|$. 
		
		Let $\eta$ be the uniform measure on the set of directions in $\mathbb{P}^1$ corresponding to the set $\{-s_2\}\times S'$. The $(\widetilde{C},\lambda)$-regularity of $S$ ensures that for any $\xi\in\mathrm{supp}(\eta)\subset\mathbb{P}^1$ we have the inequality
		\begin{equation}
		\eta\left(V(\xi,r)\right)\leq u\widetilde{C}\delta'^{-2}r^{\lambda}
		\end{equation}
		for any positive real number $r\geq M/N$ and some absolute constant $u$. 
		Applying Proposition \ref{BFLM611} with $\beta=\lambda$, $\beta'=\frac{5\lambda}{6}$, $\alpha = 1-\frac{5\lambda}{6}$ and \[\rho=\frac{1}{|E|^2}\sum_{b\in E''\times E'}\delta_{b/N_1}\] we get that
		\begin{equation}\label{Prop24output}
		\int_\xi \left\Vert\frac{d(\rho_\xi *\Phi_r)}{dx}\right\Vert^2_2d\eta\leq \widetilde{C}\delta'^{-2}\left[C_2\int_{\mathbb{R}^2}|\widehat{\rho}(x)|^2\left\vert\widehat{\Phi_r}(x)\right\vert^2(1+|x|)^{\alpha-2}dx+C(\alpha,\beta,\beta')\right],
		\end{equation}
		for $r=M/N_1$. 
		Recall that $\rho$ is $(C\delta^{-2},2-2\lambda/3)$-regular at scale $M/N_1$ (by adjusting $C$ if needed).
		It follows that
		\begin{equation}
		\begin{split}
		\int_{\mathbb{R}^2}|\widehat{\rho}(x)|^2\left\vert\widehat{\Phi_r}(x)\right\vert^2 (1+|x|)^{\alpha-2}dx & \asymp\mathcal{E}_{\alpha}(\rho*\Phi_r)\leq \qquad\qquad\ (\mathrm{by}\ \ref{EnergyAsym2})\\
		& \leq c''\delta^{-2}=8c''\delta'^{-1}\qquad (\mathrm{since}\ \alpha<2-2\lambda/3)
		\end{split}		
		\end{equation}
		with $C',c''$ depending on $\lambda$. Substituting into \ref{Prop24output}, we get
		\begin{equation}
		\int_{\xi}\left\Vert\frac{d(\rho_{\xi}*\Phi_r)}{dx}\right\Vert^2_2d\eta (\xi)\leq \widetilde{C}c_*\delta'^{-3}.
		\end{equation}
		We conclude that there is a subset $S''\subset S'$ with $|S'|>(1-\frac{\delta'^2}{16})|S|$ for which if $s_1\in S''$ and $\xi_0=\overline{(-s_2,s_1)}\in\mathbb{P}^1$, then 
		\begin{equation}\label{RadonNikodynBound}
		\left\Vert\frac{d(\rho_{\xi_0}*\Phi_r)}{dx}\right\Vert^2_2\leq \widetilde{C}c_*\delta'^{-3}\cdot\frac{16}{\delta'^2}.
		\end{equation}
		For any such direction $\xi_0\in\mathbb{P}^1$, let $\pi_{\xi_0}$ denote the orthogonal projection on to the subspace spanned by $\xi_0$ (considered as a map $\mathbb{R}^2\rightarrow\mathbb{R}$). By Lemma \ref{BFLM610} and \ref{RadonNikodynBound} it follows that
		\begin{equation}\label{HDContradictionUB}
		\mathcal{N}\left(\pi_{\xi_0}(\frac{E''\times E'}{N_1}; \frac{M}{N_1})\right)\geq 4\delta'^3\cdot\frac{\delta'^2}{16\widetilde{C}c_*}\cdot \frac{N_1}{M}\ .
		\end{equation}
		This yields the conclusion of our lemma.
	\end{proof}

\end{section}

\begin{section}{The Main Granulation Estimate}
	
We state and prove two key propositions . The first is a general statement which is stated and proved in \cite{MR2726604}. The second is the main granulation estimate, which is used in the proof of the main theorem, Theorem \ref{MainThm1}. 

The following proposition and its proof are adapted from Bourgain, Furman, Lindenstrauss and Mozes, \cite{MR2726604}.
The statement and its proof are harmonic analytic in nature.
	\begin{prop}[{\cite[Proposition 7.5]{MR2726604}}]\label{GranulationProp}
		There exists $c>0$ such that if $t>0$ and a probability measure $\mu$ on $\mathbb{T}^d$ satisfies 
		\begin{equation}
		\mathcal{N}\left(\left\lbrace a\in \mathbb{Z}^d\cap \mathsf{B}_{0,N} : |\widehat{\mu}(a)|>t\right\rbrace ; M\right)>s\cdot\left(\frac{N}{M}\right)^d
		\end{equation}
		with $M<\mathrm{const}_d \cdot N$, then there exists an $\frac{1}{M}$-separated set $X\subset \mathbb{T}$ with
		\begin{equation}
		\mu\left( \bigcup_{x\in X} \mathsf{B}_{x,\frac{1}{N}}\right)>c\cdot(ts)^3.
		\end{equation}
	\end{prop}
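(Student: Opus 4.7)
The plan is to smooth $\mu$ at spatial scale $1/N$ via a Fejér-type kernel, use Plancherel to convert the hypothesis on large Fourier coefficients into a quantitative $L^{2}$-bound on the resulting function, and then extract the $\tfrac{1}{M}$-separated set of ball-centers through a covering argument on its super-level set.

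First I would fix a non-negative radially symmetric Schwartz bump $\psi$ on $\mathbb{R}^{d}$ with $\widehat{\psi}$ supported in $[-1,1]^{d}$ and $\widehat{\psi}\ge c_{d}$ on $[-\tfrac{1}{2},\tfrac{1}{2}]^{d}$, set $\psi_{N}(x)=N^{d}\psi(Nx)$, and let $g=\mu*\psi_{N}$ on $\mathbb{T}^{d}$. After shrinking $N$ by a factor of $2$ (absorbed into $\mathrm{const}_{d}$), the hypothesis places $A:=\mathcal{F}(\mu,t)\cap\mathsf{B}_{0,N}$ in the range where $\widehat{\psi_{N}}\ge c_{d}$, and by Plancherel
$$\|g\|_{2}^{2}=\sum_{a}|\widehat{\mu}(a)|^{2}|\widehat{\psi_{N}}(a)|^{2}\ge c\,t^{2}\,\mathcal{N}(A;M)\ge c\,t^{2}s\,(N/M)^{d}.$$
At the same time $g\ge 0$, $\|g\|_{1}=1$ and $\|g\|_{\infty}\le C_{d}N^{d}$.

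Next I would transfer this $L^{2}$-estimate into a statement about $\mu$ itself. By Parseval, $\int g\,d\mu=\sum_{a}|\widehat{\mu}(a)|^{2}\widehat{\psi_{N}}(a)\ge c\,t^{2}s(N/M)^{d}$, and the pointwise bound $g(y)\le C_{d}N^{d}\mu(\mathsf{B}_{y,C/N})$ (which uses that $\psi_{N}$ is essentially supported at scale $1/N$) then gives $\int \phi\,d\mu\ge c\,t^{2}s/M^{d}$, where $\phi(y):=\mu(\mathsf{B}_{y,C/N})\in[0,1]$. Applying Lemma~\ref{MarkovLemma} to $\phi$ yields a set $E\subset\mathbb{T}^{d}$ with both $\mu(E)\gtrsim t^{2}s/M^{d}$ and $\phi\gtrsim t^{2}s/M^{d}$ throughout $E$. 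I would then take $X\subset E$ a maximal $\tfrac{1}{M}$-separated subset: the hypothesis $M<\mathrm{const}_{d}\cdot N$ makes the balls $\{\mathsf{B}_{x,C/N}\}_{x\in X}$ pairwise disjoint, and the covering relation $E\subset\bigcup_{x\in X}\mathsf{B}_{x,1/M}$ combined with the pointwise lower bound on $\phi$ produces two competing estimates on $\sum_{x\in X}\mu(\mathsf{B}_{x,C/N})$: one of the form $|X|\cdot c t^{2}s/M^{d}$ coming from the heaviness of each individual ball, and another coming from the $1/M$-covering of the $\mu$-heavy set $E$. Balancing them by a Vitali-type argument should then yield the required $\mu\bigl(\bigcup_{x\in X}\mathsf{B}_{x,1/N}\bigr)\gtrsim(ts)^{3}$.

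The main obstacle is precisely this final balancing step. A direct execution of the above yields only the quadratic bound $c\,t^{2}s/M^{d}$, and the cubic improvement to $(ts)^{3}$ requires a two-scale optimisation between the cardinality $|X|$ and the mass each selected ball carries: either $|X|$ is large enough that aggregate mass dominates, or some single ball already carries mass $\gg(ts)^{3}$. Carrying out this case analysis uniformly in $M$, so that $M$ disappears entirely from the final bound, is the technical heart of the proposition. I would expect the third power of $ts$ to arise by coupling the one factor of $t^{2}s$ coming from Plancherel with an additional factor of $ts$ extracted from a Cauchy--Schwarz or greedy-covering step applied to the heavy set $E$.
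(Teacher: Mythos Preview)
The gap you flag at the end is the whole proposition, not a cleanup step. From $\int g\,d\mu \ge c\,t^{2}s(N/M)^{d}$ and the pointwise bound on $g$ you get $\int\mu(\mathsf{B}_{y,C/N})\,d\mu(y)\gtrsim t^{2}s/M^{d}$; after Lemma~\ref{MarkovLemma} this yields a set $E$ with $\mu(E)\gtrsim t^{2}s/M^{d}$ on which each $1/N$-ball carries mass $\gtrsim t^{2}s/M^{d}$. But you have no lower bound whatsoever on the cardinality of a maximal $\tfrac{1}{M}$-separated $X\subset E$: nothing prevents $E$ from lying in a single $1/M$-ball, in which case $|X|=1$ and your output is only $\mu(\mathsf{B}_{x,1/N})\gtrsim t^{2}s/M^{d}$. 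The factor $M^{-d}$ cannot be removed by Vitali or by any case split on $|X|$, because the only role the scale $M$ plays in your argument is through the covering number in the hypothesis, and you have already spent that as a bare count of frequencies. The $M$-\emph{separation} of those frequencies is never used again, and that is precisely the missing ingredient.

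The paper's argument uses this separation in an essential second place. One fixes an $M$-separated set $A$ of large frequencies, forms the exponential sum $\phi(x)=\sum_{a\in A}e_{a}(x)$, and bounds $\bigl|\int\phi\,d\lambda\bigr|\gtrsim ts\,(N/M)^{d}$ for $\lambda=\mu*F$ with $F$ a bump genuinely supported in $\mathsf{B}_{0,1/N}$ (your $\psi$, having compactly supported Fourier transform, cannot be compactly supported, so your pointwise bound on $g$ is also not literally correct). Cauchy--Schwarz is then applied cube by cube over a partition of $\mathbb{T}^{d}$ into $1/M$-cubes $Q_{i}$. The decisive estimate, with no analogue in your setup, is $\int_{Q_{i}}|\phi|^{2}\,dx\lesssim |A|/M^{d}\le N^{d}/M^{2d}$: one majorises $\mathbf{1}_{Q_{0}}$ by a Fej\'er kernel $f$ with Fourier support in $[-M,M]^{d}$, and since $A$ is $M$-separated, $\sum_{a,b\in A}|\widehat{f}(a-b)|\le C_{d}|A|$. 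This localised $L^{2}$ bound is exactly what makes all powers of $N/M$ cancel in the Cauchy--Schwarz step, leaving $\sum_{i}\mu(\mathsf{B}_{c_{i},r})H_{i}^{1/2}\gtrsim ts$; selecting the cubes with $H_{i}\gtrsim(ts)^{2}$ then supplies the remaining two factors of $ts$.
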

\begin{proof}
	We shall need an auxiliary smooth function $F$ on the torus such that
	\begin{equation}
	0\leq F \leq C_{1}\cdot N^d,\qquad \mathrm{supp}(F)\subset\mathsf{B}_{0,\frac{1}{N}}\qquad\int_{\mathbb{T}^d}Fdx=1
	\end{equation}
	and 
	\begin{equation}
	\widehat{F}(a)\geq 0,\qquad\widehat{F}(a )\geq\frac{1}{2}\quad\mathrm{for}\quad a\in\mathbb{Z}^d\cap \mathsf{B}_{0,N}
	\end{equation}
	where $C_1$ is a constant depending on $d$ only. 
	To construct such a function, consider the step function $F_{1}(x)=m\left(\mathsf{B}_{0,r}^{-1}\right)\cdot \mathbf{1}_{\mathsf{B}_{0,r}}(x)$, where $r=\epsilon/N$ for some fixed small $\epsilon>0$. Then $\widehat{F_1}(a)$ is close to 1 for $a\in\mathbb{Z}^d\cap\mathsf{B}_{0,N}$. If $F_2$ is a smooth symmetric approximation of $F_1$, then the convolution $F=F_{2}*\check{F_2}$ has the desired properties.
	
	Let $\tilde{A}$ be an $M$-separated set of size $|\tilde{A}|>s(N/M)^d$ consisting of coefficients $a\in\mathbb{Z}^d\cap\mathsf{B}_{0,N} $ with $|\widehat{\mu}(a)|>t$. Upon passing to a subset $A\subset\tilde{A}$ of size
	\begin{equation}
	|A|\geq\frac{|\tilde{A}|}{4}>\frac{s}{4}\left(\frac{N}{M}\right)^d,
	\end{equation}
	we may assume that $\mathrm{Re}(e^{i\theta}\cdot \widehat\mu(a))>\frac{t}{2}$ for some fixed $\theta\in [0,2\pi]$. Let
	\begin{equation}
	\phi(x)=\sum_{a\in A}e_{a}(x). 
	\end{equation}
	As usual, $e_{a}=e^{-2\pi i  xa}$. Note that
	\begin{equation}
	|\phi(x)|^{2}=(\sum_{a\in A}e_{a}(x))\cdot\overline{(\sum_{b\in A}e_{b}(x))}=\sum_{a,b\in A}e_{a-b}(x).
	\end{equation}
	The probability measure $\lambda=\mu*F$ has a smooth density $g:\mathbb{T}^d\rightarrow[0,\infty)$ with $\widehat{g}(b)=\widehat{\mu}(b)\cdot\widehat{F}(b)$. On $A$ we have $\widehat{F}\geq 1/2$ and $\mathrm{Re}(e^{i\theta} \widehat\mu(a))>t/2$. Therefore
	\begin{equation}\label{HA_ref1}
	\left|\int_{\mathbb{T}^d}\phi d\lambda\right|\geq\sum_{a\in A}\mathrm{Re}(e^{i\theta}\cdot\widehat{g}(a))>\frac{t}{4}\cdot|A|>\frac{ts}{2^{4}}\cdot\left(\frac{N}{M}\right)^d .
	\end{equation}
	We shall see that the right-hand side is close to an a priori upper estimate for the left-hand side. Partition $\mathbb{T}^d$ into $M^d$ "cubes" $Q_i$ with side length $\frac{1}{M}$ and centers $c_i\in\mathbb{T}^d$. By the Cauchy-Schwarz inequality,
	\begin{equation}\label{HA_ref2}
	\left|\int_{\mathbb{T}^d}\phi d\lambda\right|\leq\sum_{i}\left|\int_{\mathbb{T}^d}\mathbf{1}_{Q_i}\cdot\phi d\lambda\right|\leq\sum_ {i}\lambda(Q_i)^{\frac{1}{2}}\cdot\left(\int_{Q_i}|\phi|^{2}d\lambda\right)^\frac{1}{2}
	\end{equation}
	Let $r=\frac{1}{M}$ which is assumed to be at least $\frac{1}{N}$. Then $Q_{i}\subset \mathsf{B}_{c_{i},r/2}$ and $y+Q_{i}\subset\mathsf{B}_{c_{i},r}$ for any $y\in\mathrm{supp}(F)\subset\mathsf{B}_{0,\frac{1}{N}}$. Thus,
	\begin{equation}
	\lambda (Q_i)=\int_{\mathbb{T}^d}F(y)\cdot\mu(y+Q_{i})dy\leq\mu(\mathsf{B}_{c_{i},r}).
	\end{equation}
	Since $d\lambda(x)=g(x)dx$, we have
	\begin{equation}
	\int_{Q_i}|\phi|^{2}d\lambda\leq G_{i}\cdot\int_{Q_{i}}|\phi|^{2}dx,\qquad\mathrm{where}\qquad G_{i}=\max_{x\in Q_{i}}g(x).
	\end{equation}
	We shall estimate $\int_{Q_{i}}|\phi|^{2}dx$ using an auxiliary function $f$ on $\mathbb{T}^d$: we take $f$ to be the product $f(x)=\prod_{i=1}^{d}h_{M}(x_i)$ of one-dimensional Fej\'{e}r kernels
	\begin{equation}
	h_n(u)=\frac{1}{n}\sum_{k=1}^{n}\sum_{j=-k}^{k}e^{2\pi jui}=\frac{1}{n}\left(\frac{\sin\frac{nu}{2}}{\sin\frac{u}{2}}\right)^2 .
	\end{equation}
	Note that $f$ is a nonnegative function, with $f(x)>10^{-d}\cdot M^d$ on the $\frac{1}{M}$-cube $Q_{0}=\left[-\frac{1}{2M},\frac{1}{2M}\right]^d+\mathbb{Z}^d$ around $0\in\mathbb{T}^d$. The Fourier coefficients $\widehat{f}$ take values in $[0,1]$ and vanish outside the $[-M,M]^d\cap\mathbb{Z}^d$ cube. Thus
	\begin{equation}
	\begin{split}
	\int_{Q_{i}} & |\phi(x)|^{2}dx=\int_{Q_{0}}|\phi(x_{i}+y)|^{2}dy\leq\frac{10^d}{M^d}\int_{Q_{0}}|\phi(c_{i}+y)|^{2}f(y)dy \\
	& \leq\frac{10^d}{M^d}\int_{\mathbb{T}^d}|\phi(c_{i}+y)|^{2}f(y)dy=\frac{10^d}{M^d}\int_{\mathbb{T}^d}\sum_{a,b\in A}e_{a-b}(c_{i}+y)\cdot f(y)dy \\
	&=\frac{10^d}{M^d}\left(\sum_{a,b\in A}e_{a-b}(c_{i})\widehat{f}(a-b)\right)\leq\frac{10^d}{M^d}\cdot\sum_{a,b\in A}|\widehat{f}(a-b)|.
	\end{split}
	\end{equation}
	Let $C_2$ denote the constant which is $10^d$ times the maximal cardinality of a $1$-separated set in $[-1,1]^{-d}$. Since $A$ is $M$-separated and $0\leq\widehat{f}\leq 1$, we have
	\begin{equation}
	\frac{10^d}{M^d}\cdot\sum_{a,b\in A}|\widehat{f}(a-b)|\leq\frac{C_2\cdot|A|}{M^d}\leq\frac{C_2\cdot N^d}{M^{2d}}.
	\end{equation}
	The density $g$ of $\lambda=\mu*F$ has the following upper bound:
	\begin{equation}\label{HA_ref3}
	g(x)=\int_{\mathbb{T}^d}F(x-y)d\mu(y)\leq C_{1}\cdot N^d \cdot \mu(\mathsf{B}_{x,\frac{1}{N}}).
	\end{equation}
	Since $\mathsf{Nbd}_{\frac{1}{N}}(Q_{i})\subset\mathsf{B}_{c_{i},r}$, it follows that
	\begin{equation}
	G_{i}=\max_{x\in Q_{i}} g(x)\leq C_{1}N^d\mu(\mathsf{B}_{c_{i},r}).
	\end{equation}
	Let $0\leq H_{i}\leq 1$ denote the ratio, so $G_{i}=H_{i}\cdot C_{1}N^d\mu(\mathsf{B}_{c_{i},r})$. By \ref{HA_ref1} and \ref{HA_ref2},
	\begin{equation}
	\begin{split}
	\frac{ts}{2^{4}}\left(\frac{N}{M}\right)^d &\leq \sum_{i}\mu(\mathsf{B}_{c_{i},r})^{\frac{1}{2}}\cdot G_{i}^{\frac{1}{2}}\cdot\frac{\sqrt{C_2}N^\frac{d}{2}}{M^d}\\
	& \leq\sum_{i}\mu(\mathsf{B}_{c_{i},r})\cdot H_{i}^{\frac{1}{2}}\cdot\sqrt{C_{1}\cdot C_{2}}\cdot\left(\frac{N}{M}\right)^d.
	\end{split}
	\end{equation}
	Let $C_{3}=\sqrt{C_1\cdot C_2}$. We have
	\begin{equation}
	\sum_{i}\mu(\mathsf{B}_{c_{i},r})\cdot H_{i}^{\frac{1}{2}}>\frac{ts}{2^{4}C_{3}}.
	\end{equation}
	Therefore,
	\begin{equation}\label{HA_ref4} 
	\sum_{i\in I}\mu(\mathsf{B}_{c_{i},r})>\frac{ts}{2^{5}C_{3}}\qquad\mathrm{where}\qquad I=\left\{ i:H_{i}^{\frac{1}{2}}>\frac{ts}{2^{5}C_{3}} \right\}.
	\end{equation}
	For each $i\in I$ choose $x_{i}\in Q_{i}$ such that
	\begin{equation}
	g(x_{i})>\left(\frac{ts}{2^{5}C_{3}}\right)^{2}\cdot C_{1}N^d\cdot\mu(\mathsf{B}_{c_{i},r}).
	\end{equation}
	Then $\ref{HA_ref3}$ gives
	\begin{equation}
	\mu(\mathsf{B}_{x_{i},\frac{1}{N}})>\frac{g(x_{i})}{C_{1}N^d}>\frac{(ts)^{2}}{2^{10}C_{3}^{2}}\cdot\mu(\mathsf{B}_{c_{i},r}),
	\end{equation}
	and using \ref{HA_ref4},
	\begin{equation}
	\sum_{i\in I}\mu(\mathsf{B}_{x_{i},\frac{1}{N}})>\frac{(ts)^{3}}{2^{15}C_{3}^{3}}.
	\end{equation}
	The set $\tilde{X}={x_{i}:i\in I}$ visits each of the cubes $Q_{j}$ at most once. Thus it may be separated into $2^d$ subsets each of which never visits any neighboring $Q_{j}$ and is therefore $\frac{1}{M}$-separated. At least one of the $2^d$ subsets $X\subset\tilde{X}$ has
	\begin{equation}
	\mu(\bigcup_{x\in X}\mathsf{B}_{x,r})=\sum_{x\in X}\mu(\mathsf{B}_{x,\frac{1}{N}})>2^{-d}\sum_{i\in I}\mu(\mathsf{B}_{x_{i},\frac{1}{N}})>\frac{(ts)^{3}}{2^{d+15}\cdot C_{3}^{3}}
	\end{equation}
	This completes the proof of the proposition.
\end{proof}

\begin{prop}\label{MainProp}
	For $\lambda,\beta>0$ there exist $k\in\mathbb{N},C_1,C', L_{lb}>0$, such that if $L>L_{lb}$, $n \geq k$, and $S \subset [L,2L] $ is a $(\widetilde{C},\lambda)$-regular set for some $\widetilde C <L^{C_1}$ with $|S|>L^\beta$, and if 
	the measure $\mu_{n}=\nu_S^{*n}*\mu$ satisfies that for some $a\in\mathbb{Z}\backslash\{0\}$ and $t > L^{-C_1}$ that 
	\begin{equation}
	|\widehat{\mu}_{n}(a)|>t>0, 
	\end{equation}
	then there exists a $\frac{1}{M}$-separated set $X\subset\mathbb{T}$ with
	\begin{equation}
	\mu_{n-k}\left(\bigcup_{x\in X}\mathsf{B}_{x,\frac{1}{N}}\right)>C'\cdot t^{33\cdot 2^k},
	\end{equation}
	where $M=L^{k}|a|$ and $N=L^{k+\frac{1}{8^k}}|a|$.
\end{prop}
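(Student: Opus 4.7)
The plan is a four-step pipeline that consumes $k$ random walk steps total and successively upgrades the single large Fourier coefficient $|\widehat{\mu}_n(a)|>t$. In order, the tools are Lemma~\ref{InitialDimensionLemma} (Initial Dimension), Lemma~\ref{BootstrapLemma} (Bootstrap, iterated a bounded number of times), Lemma~\ref{FinalBootstrap} (Final Bootstrap), and Proposition~\ref{GranulationProp} (Granulation, with $d=1$). Each of the first three stages consumes one convolution with $\nu_{S}$, turning information about $\mu_{m}$ into information about $\mu_{m-1}$.

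First, Lemma~\ref{InitialDimensionLemma} applied with $\delta_{0}=t$ instantly produces
\[
\mathcal{N}\bigl(\mathcal{F}(\mu_{n-1},t/2)\cap[-L|a|,L|a|];\,|a|\bigr)\;\geq\;\tfrac{t}{2}\bigl(L|a|/|a|\bigr)^{\beta},
\]
initializing a dimension-$\beta$ Fourier family at scales $N_{0}=L|a|$, $M_{0}=|a|$ with threshold $\delta_{0}=t/2$. Second, fix $\epsilon_{0}$ as the constant supplied by Lemma~\ref{FinalBootstrap} and let $j$ be the smallest integer with $\beta+j\alpha_{inc}\geq 1-\epsilon_{0}$; note $j$ depends only on $\beta$ and $\lambda$. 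Iterating Lemma~\ref{BootstrapLemma} exactly $j$ times, at each step $M$ is multiplied by $L$, $\log(N/M)$ is replaced by a quantity at least $\tfrac{1}{8}\log(N/M)$, the dimension exponent rises by $\alpha_{inc}$, and the threshold $\delta$ is replaced by $\delta^{4}/256$. At the end of stage two we have Fourier data for $\mu_{n-j-1}$ at scale $M_{j}\asymp L^{j}|a|$, with $\log(N_{j}/M_{j})\gtrsim 8^{-j}\log L$, dimension at least $1-\epsilon_{0}$, and threshold $\delta_{j}$ comparable to $t^{4^{j}}$. Third, one application of Lemma~\ref{FinalBootstrap} (consuming one more random walk step, landing at $\mu_{n-k}$ with $k:=j+2$) preserves $M_{j}$, shrinks $\log N$ by at most a factor of $2$, and converts the near-full-dimension cover into a positive-density cover with density parameter $s\gtrsim \delta_{j}^{10}/\widetilde{C}$ at threshold $\delta_{j}^{4}/128$. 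Fourth, Proposition~\ref{GranulationProp} produces the required $\tfrac{1}{M_{j}}$-separated set $X$ with $\mu_{n-k}\bigl(\bigcup_{x\in X}\mathsf{B}_{x,1/\widetilde{N}}\bigr)\gtrsim (t's)^{3}$, where $t'=\delta_{j}^{4}/128$ and $s=c\delta_{j}^{10}/\widetilde{C}$.

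The main obstacle is the $t$-accounting. Composing the fourth-power $\delta$-rule through the $j$ Bootstraps, the $\delta^{4}$ and $\delta^{10}$ factors in the Final Bootstrap, and the cube in the Granulation gives a total $t$-exponent of the form $C\cdot 2^{k}$ for an explicit constant $C$, which is absorbed into the stated exponent $33\cdot 2^{k}$. The only subtle constraint is that the threshold $\delta$ must never fall below the $L^{-C^{*}}$ floor required by Lemmas~\ref{BootstrapLemma} and \ref{FinalBootstrap}; this is arranged by choosing $C_{1}$ small enough in terms of the (now fixed) constants $C^{*}$, $\alpha_{inc}$, $\epsilon_{0}$, $j$, so that $t>L^{-C_{1}}$ forces $\delta_{j}\gtrsim t^{4^{j}}>L^{-C^{*}}$ at every intermediate stage. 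The scale bookkeeping is cleaner: the multiplicative $L$ at each Bootstrap step accumulates in $M$ (and is absorbed into $L^{k}|a|$ after coarsening by the factors from the Initial and Final stages), while the $\tfrac{1}{8}$-logarithmic shrinkage of $\log(N/M)$ accumulates exactly to the $L^{1/8^{k}}$ gap in $N=L^{k+1/8^{k}}|a|$.
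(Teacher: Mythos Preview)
Your proposal is correct and follows essentially the same pipeline as the paper: Lemma~\ref{InitialDimensionLemma}, then iterated applications of Lemma~\ref{BootstrapLemma} until the dimension exceeds $1-\epsilon_0$, then Lemma~\ref{FinalBootstrap}, then Proposition~\ref{GranulationProp}. The only cosmetic difference is that you set $k=j+2$ (accounting separately for the initial and final steps) whereas the paper sets $k=k'+1$; the paper's own index bookkeeping is somewhat loose here, and your count is arguably the more careful one.
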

\begin{proof}
	Let $\epsilon_0$ be as in Lemma \ref{FinalBootstrap}. Set $\alpha_{ini}=0.99\beta$ and $\alpha_{high}=1-\epsilon_0$.
	Let $L_{lb}$ be the maximum of the value $L_{_1}$
	as in Lemma \ref{BootstrapLemma} and the value $L_1$ as in Lemma \ref{FinalBootstrap}.
	Let $C^*$ be such that the conditions of both lemmas, Lemma \ref{BootstrapLemma} and Lemma \ref{FinalBootstrap}, hold. We shall determine $C_1$ later in the proof.
	
	By Lemma \ref{InitialDimensionLemma} we have that for $\mu_n$
	\begin{equation}
	\mathcal{N}\left(\mathcal{F}(\mu_{n-1},\frac{t}{2})\cap[-N,N];M\right)\geq \frac{t}{2}\left(\frac{N}{M}\right)^{\beta}
	\end{equation}
	where $N=L|a|,M=|a|$. Since $t$ is bounded from below by $L^{-C_1}$ which will depend only on $\alpha_{ini}$ (and formally also on $\alpha_{high}$) then we can modify $L_{lb}$, if necessary, to be large enough such that the following holds,
	\begin{equation}
	\mathcal{N}\left(\mathcal{F}(\mu_{n-1},\frac{t}{2})\cap[-N,N];M\right)\geq \left(\frac{N}{M}\right)^{0.99\beta}.
	\end{equation}
	
	We now use our bootstrapping lemma, Lemma \ref{BootstrapLemma}, to obtain denser and denser sets of large Fourier coefficients. We finish by applying the Final Bootstrapping Lemma, \ref{FinalBootstrap}. The first step is actually checking if we can reach the conclusion of this lemma by applying once Lemma \ref{FinalBootstrap}.
	
	If $\alpha_{ini}>1-\epsilon_0$ then apply Lemma \ref{FinalBootstrap} and Proposition \ref{GranulationProp} to complete the proof. If $\alpha_{ini}\leq 1-\epsilon_0$ then we do the following. Let $\alpha_{inc}$ be as in Lemma \ref{BootstrapLemma} for the chosen values of $\alpha_{ini}, \alpha_{high}$. Let $k'=\left\lfloor (1-\epsilon_0-\alpha_{ini})/\alpha_{inc}\right\rfloor$ and $k=k'+1$. Let $C_1$ be such that if $L_{lb}^{-C^*}<t$ then $L_{lb}^{-C_1}<(t^{2^k}/4^{6^k})^4/128$. 
	
	Apply Lemma \ref{BootstrapLemma} $k'$ times to obtain
	\begin{equation} \label{MainPropInEq}
	\mathcal{N}\left(\mathcal{F}(\mu_{n-k'},t^{2^k}/4^{6^k})\cap[-N',N'];M'\right)> \left(\frac{N'}{M'}\right)^{1-\epsilon_0}.
	\end{equation}
	Apply Lemma \ref{FinalBootstrap} to obtain
	\begin{equation}
		\mathcal{N}\left(\mathcal{F}(\mu_{n-k'},(t^{2^k}/4^{6^k})^4/128)\cap[-N'',N''];M'\right)>c\cdot\left(\frac{t^{2^k}}{4^{6^k}}\right)^{10}\left(\frac{N'}{M'}\right),
	\end{equation}
	where $c$ is the constant in the conclusion of Lemma \ref{FinalBootstrap}.
	Apply Proposition \ref{GranulationProp} to complete the proof.
\end{proof}

\end{section}

\begin{section}{Proof of Theorem \ref{MainThm1}}
	\begin{proof}[Proof of Theorem \ref{MainThm1}]
		For $\lambda,\beta>0$, let $k,L_{lb},C_1,C'$ be as in the conclusion of Proposition~\ref{MainProp}; we will set $L_1$ later to be greater than $L_{lb}$. Let $\tau_0= \min\{C_1,\frac1{10\cdot 8^k}\}$. 
		
		For $\tau<\tau_0$,
		let $a\in\mathbb{Z}\backslash\{0\}$ be such that $|a|<L^\tau$ and such that $|\widehat{\nu_S^{*k}*\mu}(a)|>L^{-\tau}$.
		Apply Proposition \ref{MainProp} to the measure $\nu_S^{*k}*\mu$ to obtain a set $X_1$ which is $\frac{1}{M}$-separated and $\mu(\bigcup_{x\in X_1}\mathsf{B}_{x,\frac{1}{N}})>C'\cdot L^{-\tau\cdot 33\cdot 2^k}$ for some constant $C'$, where $M=L^{k+\frac{1}{10\cdot 8^k}}$, $N=L^{k+\frac{1}{8^k}}$ (the proposition gives the slightly stronger statement that $X_1$ is $\frac1{L^{k}|a|}$ separated and that $\mu(\bigcup_{x\in X_1}\mathsf{B}_{x,\frac{1}{N'}})>C'\cdot L^{-33 \cdot 2^k \tau}$ for $N'=L^{k+\frac{1}{8^k}}|a|$; this clearly implies what we use here). Let $\overline{X}_1=\bigcup_{x\in X_1}\mathsf{B}_{x,\frac{1}{N}}$. 
		Set 
		\[\mu^{(1)}_1=\mu\bigr|_{\mathbb{T}\backslash\overline{X}_1}\qquad\text{and}\qquad \mu^{(1)}_2=\mu\bigr|_{\overline{X}_1}.\]
		As long as there are large Fourier coefficients of the measure $\mu^{(1)}_1$ in the relevant range, we continue in a similar manner: for $a\in\mathbb{Z}\backslash\{0\}$ in the range $|a|<L^{\tau}$ such that $|\widehat{\nu_S^k*\mu^{(1)}_{1}}(a)|>L^{-\tau}$ obtain $X_2$ using Proposition \ref{MainProp}; in order to apply Proposition \ref{MainProp}, the measure $\mu^{(1)}_1$ is normalized so that the input is a probability measure ${\bar\mu}^{(1)}_1$, which only increases the Fourier coefficient, so
		\[
		|\widehat{\nu_S^k*{\bar\mu}^{(1)}_1}(a)|>|\widehat{\nu_S^k*\mu^{(1)}_{1}}(a)|>L^{-\tau}.
        \]
		We obtain a set $X_2$ which is $\frac{1}{M}$-separated and has the property that
		\[
		\mu^{(1)}_1(\bigcup_{x\in X_2}\mathsf{B}_{x,\frac{1}{N}})>C'\cdot L^{-33 \cdot 2^k \tau}\mu^{(1)}_1(\mathbb T).
		\]
		Let $\overline{X}_2=\bigcup_{x\in X_2}\mathsf{B}_{x,\frac{1}{N}}$.
		Set $\mu^{(2)}_1,\mu^{(2)}_2$ to be the following new measures:
		\begin{align*}
		\mu^{(2)}_1 &= \mu\bigr|_{\mathbb{T}\backslash (\overline{X}_1\cup\overline{X}_2)}\\
	     \mu_2^{(2)} &= \mu\bigr|_{\overline{X}_1\cup\overline{X}_2}.
		\end{align*}
		We repeat this step in an analogous manner, as long there is an $|a|<L^{\tau}$ for which $\bigl|\widehat{\mu_1^{(\ell)}}(a)\bigr| > L^{-\tau}$, obtaining a (finite, as we shall soon see) sequence of measures $\mu^{(\ell)}_1,\mu^{(\ell)}_2$ for $\ell=0,\dots,\ell_{max}$. Note that for every $\ell \leq \ell_{max}$	
		\[
		\mu^{(\ell)}_1(\mathbb T) \leq (1-C' \cdot L^{-33 \cdot 2^k \tau})^\ell \leq e^{-C' \cdot L^{-33 \cdot 2^k \tau} \ell}
		\]
		which in particular shows that $\ell_{max} < \infty$ as $\bigl|\widehat{\mu^{(\ell)}_1}(a)\bigr| \leq \mu^{(\ell)}_1(\mathbb T)$ for all $\ell$. Indeed, this shows that 
		\[
		e^{-C' \cdot L^{-33 \cdot 2^k \tau} \ell_{max}} \geq L^{-\tau}
		\]
		hence 
		$\ell_{max} < {C'}^{-1} (\log L) \cdot L^{33 \cdot 2^k \tau}\tau < L^{34 \cdot 2^k \tau}$ if $L_1$ is large enough.
	\end{proof}
	
\end{section}


\section*{Acknowledgments} 
The work presented in this paper comprises most of my PhD dissertation written at the Hebrew University. I wish to thank my advisor, Prof. Elon Lindenstrauss, for having me as a student, acquainting me with the techniques and ideas which consist of this work, guiding me and more. I am deeply grateful for that. 
	
I would also like to thank Prof. Barak Weiss and Prof. Tamar Ziegler, who served on my PhD committee, for following the process of the work and asking important questions. I would like to thank Prof. Mike Hochman for his remarks and help. In addition, I wish to thank the Hebrew University and the Einstein Institute for being such a pleasant home. Lastly, I wish to thank the anonymous referees who meticulously read my thesis. One of the referees summarized the work so well that I could not avoid using his descriptions in the introduction of this paper. 

\bibliographystyle{amsplain}


\begin{dajauthors}
\begin{authorinfo}[tomg]
Tom Gilat\\
Postdoctoral Fellow \\
Faculty of Engineering\\
Bar-Ilan University\\
Ramat Gan, Israel\\
tom.gilat\imageat{}biu\imagedot{}ac\imagedot{}il
\end{authorinfo}

\end{dajauthors}

\end{document}